\newtheorem{theorem}{Theorem}[section]
\newtheorem{lemma}[theorem]{Lemma}
\newtheorem{prop}[theorem]{Proposition}
\theoremstyle{definition}
\newtheorem{definition}[theorem]{Definition}
\newtheorem{remark}[theorem]{Remark}
\newtheorem{conjecture}[theorem]{Conjecture}
\newcommand{\Z}{\mathbb{Z}}
\newcommand{\R}{\mathbb{R}}
\newcommand{\C}{\mathbb{C}}
\DeclareMathAlphabet{\pazocal}{OMS}{zplm}{m}{n}
\newcommand{\A}{{\pazocal{A}}}
\newcommand{\B}{{\pazocal{B}}}
\def\dot{\mathchar"013A}
\newcommand{\hdot}{{\raise1pt\hbox to0.35em{\Huge $\dot$}}}
\definecolor{dkgreen}{RGB}{0,100,0}
\definecolor{dkbrown}{RGB}{139,69,19}
\begin{document}
\date{April 24, 2018}

\title[Freeness and near freeness for small degree line arrangements]%
{Freeness and near freeness are combinatorial for line arrangements in small degrees}

\author[A. Dimca]{Alexandru Dimca}
\address{Universit\'e C\^ ote d'Azur, CNRS, LJAD, France  }
\email{dimca@unice.fr}

\author[D. Ibadula]{Denis~Ibadula}
\address{Ovidius University, Faculty of Mathematics and Informatics, 124 Mamaia Blvd., 900527 Constan\c{t}a, Romania}
\email{denis.ibadula@univ-ovidius.ro}

\author[A. M\u acinic]{Anca~M\u acinic}
\address{Simion Stoilow Institute of Mathematics, 
 Bucharest, Romania}
\email{Anca.Macinic@imar.ro}

\subjclass[2010]{Primary 32S22}

\keywords{line arrangement; free arrangement;  nearly free arrangement; Terao's conjecture; intersection lattice}

\begin{abstract} 
We prove Terao conjecture saying that the freeness is determined by the combinatorics for arrangements of 13 lines in the complex projective plane and that the property of being nearly free is combinatorial for line arrangements of up to 12 lines in the complex projective plane.
\end{abstract}
 
\maketitle

\section{Introduction} 

Let $\A$ be an arrangement of $d$ lines in the complex projective plane (or, equivalently, a central arrangement of planes in $\C^3$), defined by the equation $f=0$, with $f \in S:=\C[x,y,z]$ a polynomial of degree $d$. The minimal degree of a Jacobian relation for $f$ is the integer $mdr(f)$, defined to be the smallest integer $m \geq 0$  such that there is a nontrivial relation
$$\rho(f): a f_x + b f_y + c f_z = 0$$
among the partial derivatives $f_x, \;f_y, \; f_z$ of $f$ with coefficients $a, b, c$ homogeneous polynomials of degree $m$.
Let $AR(f)$ be the graded $S$-module  of all Jacobian relations of $f$ as above. The arrangement $\A$ is called {\it free} when the $AR(f)$ is free  as an $S$-module. The exponents of the free arrangement $\A$ are defined as the degrees of the elements of a basis for $AR(f)$. Notice that $AR(f)$ is isomorphic to the derivation module $D(\A) =\{ \theta \in Der S  \; |\; \theta(f) = 0\}$, so this definition coincides to the one in \cite{OT}.

To an arrangement of hyperplanes one associates a geometric lattice, the lattice of intersection of various subsets of the set of hyperplanes of $\A$, ordered by reverse inclusion, denoted $L(\A)$. A property of an arrangement of hyperplanes $\A$ is called {\it combinatorial} if it depends only on the lattice  isomorphism class of the lattice $L(\A)$. Important open questions regard the combinatoriality of certain properties or invariants associated to hyperplane  arrangements. Among them,  { \it Terao conjecture}, which is the subject of intense research in the field (see for instance \cite{A2, D1, DHA,Y0,Y}), occupies a central place.

\begin{conjecture}[Terao]
\label{terao}
The property of an arrangement of being free is combinatorial.
\end{conjecture}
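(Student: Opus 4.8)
This is the celebrated Terao conjecture, which is open in full generality; what follows is accordingly a strategy, organized around the invariants already known to be combinatorial, rather than a complete argument. The natural starting point is Terao's factorization theorem: if $\A$ is free with exponents $(e_1,\dots,e_\ell)$, then its characteristic polynomial satisfies $\chi(\A,t)=\prod_{i=1}^{\ell}(t-e_i)$. Since $\chi(\A,t)$ depends only on $L(\A)$, the unordered multiset of exponents is combinatorially forced once freeness is granted; in particular two lattice-isomorphic arrangements that are \emph{both} free must carry the same exponents. Thus the real content is not which exponents occur but \emph{whether} $D(\A)\cong AR(f)$ is free at all. The plan is therefore to fix a combinatorial candidate for the exponents from the factorization of $\chi(\A,t)$ and then to decide, from lattice data alone, whether $AR(f)$ attains them.

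The second step is to pin $AR(f)$ down through a combinatorial numerical invariant, for which the rank-three case (central arrangements in $\C^3$, equivalently line arrangements in $\CP^2$) is the instructive local model. Here the decisive quantities are $mdr(f)$ and the global Tjurina number $\tau(\A)=\sum_p (m_p-1)^2$, summed over the singular points $p$ with $m_p$ the multiplicity of $p$ in $L(\A)$; the latter is manifestly combinatorial. The du~Plessis--Wall theory, together with its refinements, shows that freeness is equivalent to the single equality $\tau(\A)=(d-1)^2-r(d-1-r)$ with $r=mdr(f)\le (d-1)/2$. Consequently the whole conjecture in rank three collapses to one assertion: that $mdr(f)$ is determined by $L(\A)$. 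The program is then to run an analogous reduction in each rank.

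For higher rank the inductive engine is supplied by Terao's Addition--Deletion theorem and, in its modern form, by Abe's division theorem: if a hyperplane $H$ can be chosen so that $\chi(\A^H,t)$ divides $\chi(\A,t)$ and the restriction $\A^H$ is itself free, then $\A$ is free. Divisibility of characteristic polynomials is a purely combinatorial test, so the strategy is to propagate freeness up a flag of restrictions and deletions whose relations are forced by $L(\A)$, reducing any arrangement inductively to combinatorially controlled rank-three or smaller pieces where the $mdr$/$\tau$ dictionary of the previous step applies.

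The main obstacle is exactly the combinatoriality of $mdr(f)$, equivalently of the graded Betti numbers of $AR(f)$: these are defined through the polynomial $f$, and no lattice-theoretic formula for them is known. Factorization of $\chi(\A,t)$ is \emph{necessary} but not \emph{sufficient} for freeness, and lattice-isomorphic arrangements can exhibit different syzygy degrees away from the free locus, so the Addition--Deletion/division induction stalls whenever no combinatorially forced divisor hyperplane exists, and any degeneration argument that deforms one arrangement to a lattice-isomorphic model must be shown not to cross the free/non-free boundary as $\dim AR(f)_r$ jumps semicontinuously. Bounding $mdr(f)$ from below in terms of $\tau(\A)$ in a range where the bound is sharp is precisely what renders small-degree line arrangements tractable; extending that sharpness to all degrees, and to all ranks, is the crux that keeps the conjecture open.
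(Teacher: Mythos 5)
You have not proved the statement, and you say so yourself: what you give is a program, not an argument. That is the right self-assessment, because the statement is Terao's conjecture, which is open; the paper does not prove it either, but states it as a conjecture and then establishes it only in special cases (freeness for $|\A|=13$ in Theorem \ref{thm:main}, near freeness for $|\A|\le 12$ in Theorem \ref{thm:nfree}). Within your sketch, the reduction you perform in rank three is essentially correct and is in fact the backbone of the known partial results: by Dimca's maximal-Tjurina characterization, freeness of a line arrangement of degree $d$ is equivalent to $\tau(\A)=(d-1)^2-r(d-1-r)$ with $r=mdr(f)\le (d-1)/2$, and since $\tau$ is combinatorial and $x\mapsto x(d-1-x)$ is injective on that range, Terao's conjecture for line arrangements is exactly the assertion that $mdr(f)$ is lattice-determined on the free locus. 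But that assertion is precisely the open problem; your steps two and three assume it (or assume the existence of a combinatorially forced hyperplane for the Addition--Deletion/division induction, which is the same gap in disguise), so the proposal establishes nothing beyond a restatement of the difficulty.

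It is worth contrasting this with how the paper gets unconditional results in small degree, because it deliberately sidesteps the combinatoriality of $mdr(f)$ rather than attacking it. For $|\A|=13$ the paper first uses known criteria (Abe's bound, the Abe--Cuntz--Kawanoue--Nozawa theorem, Yoshinaga's multirestriction conditions, Dimca's multiplicity bounds) to reduce to arrangements with $d_1=6$, $m(\A)\in\{4,5\}$, and every line carrying $5$ or $6$ multiple points; then a purely combinatorial counting argument (Lemma \ref{lemma:2lines_6points}, via linear systems in the numbers of lines of each type) produces a line $H$ with $6$ multiple points whose deletion still leaves such a line. Deleting $H$ gives a nearly free arrangement by the Abe--Dimca addition--deletion theorem (Theorem \ref{thm:5.11_del_restr}); near freeness transfers across a lattice isomorphism by Theorem \ref{thm:thm5.8_AD}, and adding the line back yields freeness of the lattice-isomorphic arrangement. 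So the paper's mechanism is the free/nearly-free interplay plus explicit combinatorial bookkeeping, valid only in a range where the bookkeeping closes up --- exactly the ``sharpness in small degree'' you point to at the end, but implemented through nearly free arrangements rather than through any claim about $mdr(f)$ in general.
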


We prove in this note  that Terao conjecture holds for line arrangements having $13$ lines and make a step towards proving the conjecture for arrangements of $14$ lines. It is known that for arrangements of cardinal at most $12$, the conjecture holds (see \cite{FV}, \cite{ACKN}). It is natural to look as the next step to arrangements of $13$ lines. 
Moreover, this pursuit is justified by the fact that it is known that $13$ is the smallest cardinal for a line arrangement to be free, but not recursively free (\cite{ACKN}). 

Another result concerns the combinatorial nature of the property of an arrangement of being nearly free. We give a positive answer for arrangements of at most $12$ lines.

The proof of our results relies in fact on the interplay between free and nearly free properties of line arrangements. Some necessary definitions and results are recalled in the next section. In the third section we prove Theorem \ref{thm:main}, the Terao conjecture for arrangements of $13$ lines, and in the fourth section we prove that near freeness is combinatorial for arrangements of at most $12$ lines (Theorem \ref{thm:nfree}) and finally that the Terao conjecture for arrangements of $14$ lines can be reduced to the problem of combinatoriality of near freeness for arrangements of $13$ lines, in Proposition  \ref{prop:14free}.\\

It has recently been brought to our attention by Hiraku Kawanoue that both he
and Torsten Hoge have also confirmed the Terao conjecture in the case $|\A| = 13$.
Their approaches seem to be through computer aided computations, and have not been published, not even in a preprint form. 
\section{The results we need}

Yoshinaga has introduced in \cite{Y0,Y} a technique of study of freeness of arrangements through multiarrangements. A {\it multiarrangement} is simply a pairing of an arrangement $\A$ with a map $m: \A \rightarrow \Z_{\geq 0}$, called {\it multiplicity}. An arrangement can then be identified with a multiarrangement with constant multiplicity $m \equiv 1$. A notion of freeness (and a corresponding notion of exponents) for multiarrangements is defined (\cite[Def. 1.12]{Y}). It is easy to see that an arrangement in $\C^2$ is always free with exponents $(1, |\A| - 1)$. Similarly, it is true that a multiarrangement in $\C^2$ is free and its exponents $(d_1, d_2)$ satisfy $d_1+d_2 = |\A|$, however these exponents are not trivial to compute. Actually, as we recall below using \cite{Y},  their computation is related to the freeness property for arrangements in $\C^3$.

\begin{prop}
\label{prop:Y1.23} \cite[Prop.1.23]{Y}
Let $(\A, m)$ be a $2-$multiarrangement. We may assume that $m_i = m(H_i)$ satisfies $m_1 \geq m_2 \geq \dots \geq m_n >0$. Set $m = \sum_{i=1}^n m_i$.
\begin{enumerate}
\item If $m_1 \geq \frac{m}{2}$, then the exponents are $exp(\A, m) = (m_1, m - m_1)$
\item If $n \geq \frac{m}{2}+1$, then  $exp(\A, m) = (m-n+1, n-1)$
\item If $m_1 = m_2 = \dots = m_n = 2$, then $exp(\A, m) = (n, n)$
\end{enumerate}
\end{prop}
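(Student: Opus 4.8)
The plan is to reduce everything to the computation of the smaller of the two exponents. As recalled above, $(\A,m)$ is free of rank two, with exponents $d_1\le d_2$ satisfying $d_1+d_2=m$, and for a rank-two free graded module the smaller exponent is the least degree of a nonzero homogeneous element, $d_1=\min\{\deg\theta:0\ne\theta\in D(\A,m)\}$. So it suffices to pin down $d_1$ in each case, after which $d_2=m-d_1$. Writing $\alpha_i=a_ix+b_iy$ for a defining form of $H_i$ and $\theta=P\partial_x+Q\partial_y$, membership in $D(\A,m)$ means $\alpha_i^{m_i}\mid \theta(\alpha_i)=a_iP+b_iQ$ for every $i$.

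First I would produce explicit derivations realizing the claimed values. For (1), choose coordinates with $\alpha_1=y$; then $\theta_1:=\bigl(\prod_{i\ge2}\alpha_i^{m_i}\bigr)\partial_x$ lies in $D(\A,m)$, since $\theta_1(y)=0$ and $\alpha_i^{m_i}\mid\theta_1(\alpha_i)$ for $i\ge2$, and has degree $m-m_1$, so $d_1\le m-m_1$. For (2), the Euler derivation $\theta_E=x\partial_x+y\partial_y$ satisfies $\theta_E(\alpha_i)=\alpha_i$, so $\bigl(\prod_i\alpha_i^{m_i-1}\bigr)\theta_E\in D(\A,m)$ has degree $m-n+1$, giving $d_1\le m-n+1$. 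For (3) a dimension count (the $2n$ conditions $\alpha_i^2\mid\theta(\alpha_i)$ cut out a space of dimension $\ge 2n+2-2n=2$ inside the degree-$n$ derivations) shows degree-$n$ derivations exist, consistent with $d_1\le m/2=n$.

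The matching lower bounds rest on the fact that a nonzero $\theta$ is tangent to at most one line: if $\theta(\alpha_i)=\theta(\alpha_j)=0$ with $i\ne j$ then $\theta$ kills two independent forms and vanishes. This gives a dichotomy. If $\theta$ is tangent to some $H_j$, write $\theta=h\,\eta_j$ with $\eta_j=b_j\partial_x-a_j\partial_y$ the constant-coefficient derivation killing $\alpha_j$; since $\eta_j(\alpha_i)=a_ib_j-a_jb_i\ne0$ for $i\ne j$, the conditions force $\prod_{i\ne j}\alpha_i^{m_i}\mid h$, so $\deg\theta=\deg h\ge m-m_j$. If $\theta$ is tangent to no line, each $\theta(\alpha_i)\ne0$ is a form of degree $\deg\theta$ divisible by $\alpha_i^{m_i}$, so $\deg\theta\ge m_1$. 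In case (1) the hypothesis $m_1\ge m/2\ge m-m_1$ closes both branches, giving $d_1\ge m-m_1$ and hence $\exp(\A,m)=(m_1,m-m_1)$. In case (2) the tangent branch already gives $\deg\theta\ge m-m_j\ge m-m_1\ge m-n+1$ (using $m_1\le m-(n-1)$), so only the tangent-free branch remains.

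The real obstacle is that tangent-free branch, trivial for (1) but decisive for (2) and (3): there the crude bound $\deg\theta\ge m_1$ falls short (for (3), $m_1=2$ while one needs $n$). Writing $P=g\tilde P,\ Q=g\tilde Q$ with $\gcd(\tilde P,\tilde Q)=1$, the forms $\tilde M_i:=a_i\tilde P+b_i\tilde Q$ are pairwise coprime members of the base-point-free pencil $\langle\tilde P,\tilde Q\rangle$, each carrying a zero of order $\ge m_i-v_{\alpha_i}(g)$ at $\alpha_i$; Riemann--Hurwitz for the degree-$(\deg\theta-\deg g)$ map $\PP^1\to\PP^1$ it defines bounds the total ramification, and the same data sits in the Wronskian $W=P_xQ_y-P_yQ_x$, which a local computation shows is divisible by $\prod_i\alpha_i^{m_i-1}$. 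Both routes, however, contribute only at order $\tfrac12\deg\theta$, yielding merely $\deg\theta\gtrsim n/2+1$ in case (3)---short of $n$ by a factor of two. Bridging this gap is exactly the balanced-exponents phenomenon for constant multiplicity two; to finish (3) I would appeal to the structure theory of $2$-multiarrangements, either an explicit exponent computation for constant multiplicity or a duality on $D(\A,2\cdot\mathbf 1)$ forcing the exponents to be symmetric about $m/2=n$, hence equal. For (2) I would sidestep the estimate via Saito's criterion, completing $\bigl(\prod_i\alpha_i^{m_i-1}\bigr)\theta_E$ to a basis by an interpolation argument producing a degree-$(n-1)$ partner whose Saito determinant is $\doteq\prod_i\alpha_i^{m_i}$, the independence of the interpolation conditions (guaranteed by distinctness of the lines) being the one nontrivial point.
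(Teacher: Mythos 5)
First, a point of reference: the paper offers no proof of this proposition at all --- it is quoted verbatim from Yoshinaga's survey \cite[Prop.~1.23]{Y} --- so your attempt can only be judged on its own terms. On those terms, case (1) is complete and correct: the explicit derivation $\bigl(\prod_{i\ge2}\alpha_i^{m_i}\bigr)\partial_x$ gives the upper bound, and your tangent/tangent-free dichotomy gives the matching lower bound since $m_1\ge m/2\ge m-m_1$ closes both branches. But cases (2) and (3) contain genuine gaps, which you yourself flag. For (2), the missing tangent-free lower bound has a short proof using a tool you already have on the table: pair an arbitrary nonzero $\theta=P\partial_x+Q\partial_y\in D(\A,m)$ with your explicit element $\eta=\bigl(\prod_i\alpha_i^{m_i-1}\bigr)\theta_E$ and invoke the standard fact that the Saito determinant of any two elements of $D(\A,m)$ is divisible by $\prod_i\alpha_i^{m_i}$. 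Here that determinant is $\prod_i\alpha_i^{m_i-1}\cdot(Py-Qx)$, so either $Py-Qx=0$, whence $\theta=h\theta_E$ with $\prod_i\alpha_i^{m_i-1}\mid h$ and $\deg\theta\ge m-n+1$, or $\prod_i\alpha_i\mid Py-Qx$ and $\deg\theta\ge n-1\ge m-n+1$ under the hypothesis $n\ge m/2+1$. Your proposed alternative --- completing $\eta$ to a basis by interpolation and applying Saito's criterion --- buries the whole difficulty in the unproved independence of the interpolation conditions, which is essentially equivalent to the statement being proved.

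Case (3) is the serious gap. As you candidly compute, both the Riemann--Hurwitz count for the pencil $[\tilde P:\tilde Q]$ and the Wronskian divisibility $\prod_i\alpha_i^{m_i-1}\mid P_xQ_y-P_yQ_x$ only yield $\deg\theta\gtrsim n/2+1$, half of what is required; and the dimension count $\dim D(\A,m)_n\ge2$ is equally consistent with unbalanced exponents $(n-k,n+k)$, for which $\dim D(\A,m)_n=k+1\ge2$ as well, so it proves nothing beyond $d_1\le n$. The proposed remedies --- ``an explicit exponent computation for constant multiplicity'' or ``a duality on $D(\A,2\cdot\mathbf 1)$ forcing the exponents to be symmetric'' --- are not arguments but restatements of the conclusion: the balancedness $\exp(\A,2\cdot\mathbf 1)=(n,n)$ \emph{is} the structure theory being invoked, and no such self-duality is exhibited. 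In summary, the proposal proves (1), can be repaired along the lines above to prove (2), and does not prove (3); for (3) one must actually reproduce the argument from \cite{Y} or the sources it rests on (cf.\ \cite{A2}), rather than cite its existence.
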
 

To an arbitrary arrangement $\A$ one may associate  certain multiarrangements as restrictions. For a hyperplane $H \in \A$, consider the arrangement induced on $H, \;\A^H$. Define a multiplicity map $m^H$ on $\A^H$ by 

\begin{equation}
\label{formula:m^H}
X \in \A^H \mapsto \#\{K \in \A| \; X \subset K\} - 1
\end{equation}

We call the multiarrangement $(\A^H, m^H)$ the Ziegler multirestriction of $\A$ onto $H$. 
The next result gives a set of sufficient conditions for the combinatoriality of the freeness property in terms of Ziegler restrictions.

\begin{prop}\cite[Prop 1.47]{Y}
\label{prop:Y1.47} 
Let $\A$ be a projective line arrangement such that there exist a hyperplane $H \in \A$ with the Ziegler multirestriction $(\A^H, m^H)$ satisfying one of the conditions in Proposition \ref{prop:Y1.23}. Then the freeness of $\A$ implies the freeness of any other arrrangement in the same lattice isomorphism class.
\end{prop}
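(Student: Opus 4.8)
The plan is to invoke Yoshinaga's freeness criterion, which characterizes the freeness of a central essential arrangement in $\C^3$ through its Ziegler restriction together with a single combinatorial constraint. For a line arrangement $\A$ in $\CP^2$ and a fixed $H \in \A$, the $2$-multiarrangement $(\A^H, m^H)$ is automatically free, with exponents $(d_1, d_2)$ satisfying $d_1 + d_2 = |\A| - 1$; Yoshinaga's theorem then asserts that $\A$ itself is free precisely when the reduced characteristic polynomial $\chi_0(\A, t) := \chi(\A, t)/(t-1)$ factors as $(t - d_1)(t - d_2)$. Since the linear coefficient of $\chi_0$ is $-(|\A|-1)$ and so already matches $-(d_1+d_2)$ automatically, this reduces to the single numerical equality $d_1 d_2 = b_2(\A)$, where $b_2(\A)$ is the constant term of $\chi_0(\A,t)$. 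Thus freeness is recast as the coincidence of two numbers: $b_2(\A)$, which is manifestly combinatorial by the Orlik--Solomon theory, and $d_1 d_2$, which a priori depends on the geometry of $\A$ through the exponents of the Ziegler restriction.

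Next I would transport the combinatorial data. Let $\A'$ be any arrangement with $L(\A') \cong L(\A)$. A lattice isomorphism sends the atom corresponding to $H$ to a distinguished line $H' \in \A'$, and since the multiplicity $m^H$ is defined by the purely lattice-theoretic formula \eqref{formula:m^H}, the multiarrangements $(\A^H, m^H)$ and $((\A')^{H'}, m^{H'})$ have the same number of points and the same multiset of multiplicities $m_1 \geq \dots \geq m_n$. The genuine obstacle is that the exponents of a $2$-multiarrangement are \emph{not} determined by these data in general; Ziegler's examples exhibit multiarrangements with identical multiplicities but different exponents, so freeness of the Ziegler restriction (which always holds) carries no exponent information by itself. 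This is exactly where the hypothesis is used: each of the three cases of Proposition \ref{prop:Y1.23} expresses $(d_1, d_2)$ as an explicit function of $n$ and of $m_1, \dots, m_n$ only, with no reference to the positions of the points. Hence if $(\A^H, m^H)$ falls into one of these cases, so does $((\A')^{H'}, m^{H'})$, and the two have the \emph{same} exponents $(d_1, d_2)$.

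The conclusion is then immediate. Assume $\A$ is free. By the criterion of the first paragraph, $d_1 d_2 = b_2(\A)$, where $(d_1, d_2) = \exp(\A^H, m^H)$. For $\A'$, the Ziegler restriction $((\A')^{H'}, m^{H'})$ lies in the same case of Proposition \ref{prop:Y1.23} and therefore has the identical exponents, so its exponent product is again $d_1 d_2$; meanwhile $b_2(\A') = b_2(\A) = d_1 d_2$ because $b_2$ is a lattice invariant. Thus the numerical condition of Yoshinaga's criterion is satisfied by $\A'$, and $\A'$ is free. The argument is in fact symmetric in $\A$ and $\A'$, so under these hypotheses freeness is fully combinatorial, which is slightly more than the stated one-directional implication.

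The principal difficulty, and the reason the statement is confined to the three regimes of Proposition \ref{prop:Y1.23}, is precisely the failure of combinatoriality of multiarrangement exponents. Freeness of the Ziegler restriction is never in question; what can break down is the reading-off of the geometric quantity $d_1 d_2$ from the lattice. The whole weight of the proof therefore rests on Proposition \ref{prop:Y1.23} supplying closed formulas for the exponents in these three cases, pinning down $d_1 d_2$ as a lattice invariant and allowing the equality $d_1 d_2 = b_2(\A)$ to pass from $\A$ to every arrangement sharing its intersection lattice.
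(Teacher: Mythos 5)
Your argument is correct and is essentially the intended one: the paper cites this statement from \cite[Prop.~1.47]{Y} without reproving it, and Yoshinaga's proof is exactly your reduction via his freeness criterion $b_2(\A)=d_1d_2$ for the Ziegler restriction exponents, combined with the observation that Proposition \ref{prop:Y1.23} makes those exponents a function of the (lattice-determined) multiplicities alone. Nothing further is needed.
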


A recent notion, of {\it near freeness}, for plane projective curves was introduced in \cite{DS} by Dimca-Sticlaru. The authors conjecture that any rational cuspidal curve that is not a free divisor is nearly free.
We will consider here this notion only in the restricted context of projective line arrangements. 

Let $J_f$ be the Jacobian ideal of $f$, that is, the ideal spanned by the partial derivatives of $f$, and denote by $\hat{J_f}$ the saturation of $J_f$ with respect to the maximal ideal $m = (x, y, z)$ in $S$. 

\begin{definition}(\cite{DS})
\label{def:nfree}
An arrangement is called { \it nearly free} if the quotient graded $S$-module $N(f) = \hat{J_f}/J_f$ is nontrivial and $\dim N(f)_k \leq 1$, for any $k$.
\end{definition}

Moreover, a parallel  notion of (near)exponents is introduced in \cite{DS}.
A result mirroring the decomposition of the characteristic polynomial with respect to the exponents for free arrangements is then stated for nearly free ones. To state it, we recall the following. The projective arrangement $\A$ can be naturally identified with a central arrangement $\overline{\A}$ of planes in $\C^3$. We define the characteristic polynomial of $\A, \;\chi(\A; t)$, by relation to the characteristic polynomial of $\overline{\A}$: since $\overline{\A}$ is central,  $\chi(\overline{\A};t)$ always has as a factor $t-1$. 
Let us define then  $\chi(\A; t) = \chi(\overline{\A}; t)/(t-1)$.

\begin{prop}\cite[Prop 3.12]{DS}
\label{prop:DS} Let $\A$ be nearly free with $nexp = (d_1, d_2)$. Then $d_1+d_2 = |\A|$ and 
$$
\chi(\A; t) = (t-d_1)(t-d_2+1)+1
$$
\end{prop}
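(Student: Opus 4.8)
The plan is to route both sides through the global Tjurina number $\tau(\A)$, which is what converts the combinatorial left-hand side into the exponent data on the right. Write $d=|\A|$. Since $\overline{\A}$ is an essential central arrangement of rank three, one has $\chi(\A;t)=\chi(\overline{\A};t)/(t-1)=t^2-(d-1)t+b_2$ for a combinatorial integer $b_2$. Expanding the claimed polynomial gives $(t-d_1)(t-d_2+1)+1=t^2-(d_1+d_2-1)t+\bigl(d_1(d_2-1)+1\bigr)$, so once $d_1+d_2=d$ is established the two quadratics already share leading and linear coefficients; the whole statement then reduces to the single identity $b_2=d_1(d_2-1)+1$, that is, $b_2=d_1d_2-d_1+1$.

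First I would record the purely combinatorial identity $b_2=(d-1)^2-\tau(\A)$. From the M\"obius function of $L(\overline{\A})$, each rank-two flat coming from a point $p$ where $m_p$ lines meet contributes $\mu=m_p-1$, so the coefficient of $t$ in $\chi(\overline{\A};t)$ is $\sum_p(m_p-1)$; dividing by $t-1$ yields $b_2=\sum_p(m_p-1)-(d-1)$. Because every singular point of a line arrangement is an ordinary $m_p$-fold point with $\tau_p=(m_p-1)^2$, and because counting pairs of lines gives $\sum_p\binom{m_p}{2}=\binom{d}{2}$, one obtains $\sum_p(m_p-1)=d(d-1)-\tau(\A)$, and hence $b_2=(d-1)^2-\tau(\A)$.

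The core step is to compute $\tau(\A)$ from the near-exponents. Here I would invoke the minimal free resolution of a nearly free module, $0\to S(-d_2-1)\to S(-d_1)\oplus S(-d_2)^2\to AR(f)\to 0$, where $d_1$ is the smallest generating degree $mdr(f)$, and combine it with the exact sequence $0\to AR(f)(-(d-1))\to S(-(d-1))^3\xrightarrow{(f_x,f_y,f_z)}S\to S/J_f\to 0$. Taking Hilbert series gives $H(S/J_f;t)=P(t)/(1-t)^3$ with $P(t)=1-3t^{d-1}+t^{d-1+d_1}+2t^{d-1+d_2}-t^{d+d_2}$. As $V(J_f)\subset\C^3$ is one-dimensional (the affine cone over the finitely many singular points), this series has a pole of order one at $t=1$, forcing $(1-t)^2\mid P$; the condition $P'(1)=0$ is exactly $d_1+d_2=d$. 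The stabilized value of the Hilbert function is then $\tau(\A)=\tfrac12 P''(1)$, which upon substituting $d_2=d-d_1$ simplifies to $(d-1)^2-d_1(d_2-1)-1$.

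Combining the two computations closes the argument: $b_2=(d-1)^2-\tau(\A)=(d-1)^2-\bigl((d-1)^2-d_1(d_2-1)-1\bigr)=d_1(d_2-1)+1=d_1d_2-d_1+1$, which is the constant term demanded above, while $d_1+d_2=d$ was produced en route. I expect the third step to be the main obstacle: one must justify that the definition of near freeness through $N(f)=\hat{J_f}/J_f$ with $\dim N(f)_k\le 1$ genuinely yields the three-generator resolution written above, and that the stabilized dimension $\lim_k\dim(S/J_f)_k$ equals $\tau(\A)$ (equivalently, that the finite-length module $N(f)$ and the saturation $\hat{J_f}$ are correctly accounted for). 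Granting the resolution and this identification of $\tau$, everything else is routine Hilbert-series bookkeeping.
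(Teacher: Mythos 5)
This statement is not proved in the paper at all: it is quoted verbatim from \cite[Prop.~3.12]{DS}, so there is no in-paper argument to compare against. Your reconstruction is correct and in fact follows the same route as the source: reduce everything to the identity $b_2=(d-1)^2-\tau(\A)$ (your M\"obius-function and $\sum_p\binom{m_p}{2}=\binom{d}{2}$ bookkeeping is right, using that every singular point of a line arrangement is ordinary and quasi-homogeneous, so $\tau_p=\mu_p=(m_p-1)^2$), and then compute $\tau(\A)$ from the graded resolution of $AR(f)$. Your Hilbert-series computation checks out: $P(1)=0$ automatically, $P'(1)=d_1+d_2-d$ vanishes because $\dim V(J_f)=1$ forces a simple pole at $t=1$, and $\tfrac12P''(1)=(d-1)^2-d_1(d_2-1)-1$ after substituting $d_2=d-d_1$, which yields exactly the constant term $d_1(d_2-1)+1$. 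The one genuine dependency you correctly flag is the equivalence between Definition \ref{def:nfree} (via $N(f)=\hat{J_f}/J_f$) and the three-generator resolution $0\to S(-d_2-1)\to S(-d_1)\oplus S(-d_2)^2\to AR(f)\to 0$, together with the identification of the stabilized Hilbert function of $S/J_f$ with $\tau$; both are themselves theorems of \cite{DS}, so your argument is a faithful derivation of the proposition from those inputs rather than from the bare definition, which is exactly how the cited source proceeds.
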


The notions of free and nearly free arrangements are subtly connected, as a series of results in \cite{AD} linking freeness and near-freeness through deletion-restriction type statements show (\cite[Thm 5.11, Thm. 5.7, Thm. 5.10]{AD}). We will use in our proof the following one.

\begin{theorem}(\cite[Thm 5.11]{AD})
\label{thm:5.11_del_restr}
Let $\A$ be an arrangement of lines in the complex projective plane, $H \in \A$ and $\B = \A \setminus \{H\}$. Also, let $d_1 \leq d_2$ be two non-negative integers. Then any two of the following imply the third:
\begin{enumerate}
\item $\A$ is free with $exp(\A) = (d_1, d_2)$
\item $\B$ is nearly free with $nexp(\B) = (d_1, d_2)$
\item $|\A^H| = d_1$
\end{enumerate}
\end{theorem}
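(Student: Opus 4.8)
The plan is to reduce everything to two numerical invariants of a reduced plane curve — its total Tjurina number $\tau$ and the minimal relation degree $mdr$ — and to the way these change when the line $H$ is removed. Write $d=|\A|$, so $|\B|=d-1$, and set $m=|\A^H|$. I pass to the rank two reflexive sheaf $\cE_{\A}$ on $\PP^2$ obtained by sheafifying $AR(f)$ (and likewise $\cE_{\B}$); recall that $c_1(\cE_{\A})=1-d$ and $c_2(\cE_{\A})=(d-1)^2-\tau(\A)$. I will use the standard numerical dictionary (du Plessis--Wall together with Dimca--Sticlaru): once $mdr(f)=d_1\le (d-1)/2$ is known, $\A$ is free with $exp(\A)=(d_1,d_2)$, $d_1+d_2=d-1$, iff $\tau(\A)=(d-1)^2-d_1d_2$; and $\B$ is nearly free with $nexp(\B)=(d_1,d_2)$, $d_1+d_2=d-1$, iff $mdr(f_{\B})=d_1$ and $\tau(\B)=(d-2)^2-d_1(d_2-1)-1$ (the last value read off from the resolution $0\to S(-d_2-1)\to S(-d_1)\oplus S(-d_2)^{2}\to AR(f_{\B})\to 0$).

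The single geometric input I need is the effect of deleting $H$ on $\tau$. Each singular point $p$ on $H$ is ordinary of some multiplicity $k_p$, contributing $(k_p-1)^2$ to $\tau(\A)$ and $(k_p-2)^2$ to $\tau(\B)$, while points off $H$ are untouched. Summing the differences $2k_p-3$ over the $m$ points of $\A^H$, and using $\sum_p(k_p-1)=|\B|=d-1$ and $\sum_p 1=m$, I will obtain the identity
\begin{equation*}
\tau(\A)-\tau(\B)=2(d-1)-m .
\end{equation*}
This already disposes of the implication (1)$\wedge$(2)$\Rightarrow$(3): inserting the free value of $\tau(\A)$ and the nearly free value of $\tau(\B)$, the $d_1d_2$ terms cancel and the left-hand side collapses to $2(d-1)-d_1$, forcing $m=d_1$.

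For the remaining two implications the same identity carries the bookkeeping. Given $m=d_1$ together with the $\tau$ value of one of the curves, the displayed formula returns exactly the $\tau$ value that the dictionary attaches to freeness of $\A$ (respectively near freeness of $\B$). What is then missing — and this is the crux — is the value of $mdr$ for the target curve; only once $mdr=d_1$ is secured does the numerical criterion apply. I would establish this by comparing $\cE_{\A}$ and $\cE_{\B}$ through the deletion exact sequence whose third term is a length $m$ sheaf supported on $H$, and reading off the least twist with a nonzero global section from the associated long exact cohomology sequence; alternatively one can route through the Ziegler multirestriction $(\A^{H},m^{H})$, whose exponents sum to $d-1$ and are accessible via Proposition \ref{prop:Y1.23}, in the spirit of Proposition \ref{prop:Y1.47}.

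I expect this transfer of $mdr$ to be the genuine difficulty: the two $\tau$ equalities are essentially free, but showing that $|\A^H|=d_1$ really pins the minimal relation degree of the target at $d_1$ (rather than letting it drop) is where the work concentrates. I also anticipate that the balanced case $d_1=d_2$, where $mdr=(d-1)/2$ sits at the boundary of the du Plessis--Wall range, will need separate treatment in order to exclude the mirror value $mdr=d_2$.
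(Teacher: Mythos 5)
First, a point of reference: the paper does not prove this statement at all — it is imported verbatim from Abe--Dimca \cite[Thm 5.11]{AD} — so there is no internal proof to compare against and your attempt must stand on its own. Your numerical setup is correct, and the implication $(1)\wedge(2)\Rightarrow(3)$ is actually complete as written: all singularities of a line arrangement are ordinary, an ordinary $k$-fold point contributes $(k-1)^2$ to $\tau$, and $\sum_{p\in \A^H}(k_p-1)=d-1$ since each line of $\B$ meets $H$ exactly once, so the identity $\tau(\A)-\tau(\B)=2(d-1)-|\A^H|$ holds and, combined with the two Tjurina values, forces $|\A^H|=d_1$.

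The gap is exactly where you flag it, and it is the substance of the theorem rather than a loose end. For $(1)\wedge(3)\Rightarrow(2)$ and $(2)\wedge(3)\Rightarrow(1)$ you produce the correct target value of $\tau$, but the du Plessis--Wall/Dimca characterizations (\cite[Thm 1.3]{D1}) of freeness and near freeness by $\tau$ presuppose knowledge of $mdr$ of the target curve, and ``reading off the least twist with a nonzero section'' from the deletion sequence is a statement of intent, not an argument: that sequence only localizes $mdr$ to a two-element set (deletion or addition of a line moves $mdr$ by at most one), and excluding the wrong alternative is precisely what \cite[Thm 5.11]{AD} accomplishes via the splitting type of $\cE_{\A}$ along $H$. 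One can close the unbalanced case cheaply — if $d_2\ge d_1+2$, the hypothesized $\tau$ exceeds the du Plessis--Wall upper bound for $r=d_1+1$, since $(d-1)^2-d_1d_2>(d-1)^2-(d_1+1)(d_2-1)$ exactly when $d_2>d_1+1$ — but the balanced cases $d_2\in\{d_1,d_1+1\}$ survive this comparison, and those are precisely the cases the present paper relies on (e.g. $exp(\A)=(6,6)$ for $d=13$). Until the transfer of $mdr$ is carried out there, two thirds of the theorem remain unproved.
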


Finally, it is worth noticing that the connection between the two notions is underlined by the fact that a series of results that hold for free arrangements seem to admit a near free counterpart (see for instance the result below, compared to  \cite[Thm 1.1(3)]{A}). 

\begin{theorem}\cite[Thm 5.8]{AD}
\label{thm:thm5.8_AD}
Let $\A$ be an arrangement of lines in the complex projective plane with $\chi(\A, t) = t^2 - b_1t + b_2$, where $b_1 = |\A| - 1$. Let $\chi(\A, t) = (t-a)(t-b)+1$ with $a, b \in \R, \; a \leq b, \; a+b = b_1$. Then $\A$ is nearly free if there is $H \in \A$ such that 
\begin{enumerate}
\item $|\A^H| = b+1$ or,
\item $|\A^H| = a+1$ and $b \neq a+2$
\end{enumerate}
\end{theorem}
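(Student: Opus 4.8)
The plan is to read off near freeness from the Ziegler multirestriction $(\A^H,m^H)$, a $2$-multiarrangement whose lines are the $|\A^H|$ points of $H$ and whose total multiplicity is $\sum_{p\in\A^H}m^H(p)=|\A|-1=a+b$; write $(e_1,e_2)$, $e_1\le e_2$, $e_1+e_2=a+b$, for its exponents. The hypothesis $\chi(\A,t)=(t-a)(t-b)+1$ says exactly that the coefficient $b_2$ in $\chi(\A,t)=t^2-b_1t+b_2$ equals $ab+1$, and for a line arrangement $\tau(\A)=(|\A|-1)^2-b_2=a^2+ab+b^2-1$. I would use Yoshinaga's comparison between $\A$ and its Ziegler restriction \cite{Y}: $\A$ is free if and only if $b_2=e_1e_2$, and, through the generic splitting type of the logarithmic bundle together with semicontinuity, $\dim_\C N(f)\le b_2-e_1e_2$. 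Since for these arrangements freeness is equivalent to $N(f)=0$, the whole statement reduces to producing one hyperplane for which the Ziegler exponents are exactly $(a,b)$: then $b_2=ab+1\ne ab=e_1e_2$ forces $N(f)\ne 0$, while $\dim_\C N(f)\le b_2-e_1e_2=1$ forces $\dim_\C N(f)=1$, so $N(f)$ is a nonzero module of total dimension $1$; in particular $\dim N(f)_k\le 1$ for all $k$ and $\A$ is nearly free, with $nexp(\A)=(a,b+1)$ in agreement with Proposition \ref{prop:DS}.

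Under hypothesis (1), $|\A^H|=b+1$, the exponents are immediate from Proposition \ref{prop:Y1.23}(2): taking $n=|\A^H|=b+1$ and $m=a+b$, the required inequality $n\ge \frac m2+1$ is just $b\ge a$, so $exp(\A^H,m^H)=(m-n+1,n-1)=(a,b)$ and the reduction above applies verbatim to the given $H$. This works even when $b=a+2$: although then $\chi(\A,t)=(t-a-1)^2$ could a priori be the characteristic polynomial of a free arrangement with $exp=(a+1,a+1)$, here the Ziegler exponents are the unbalanced pair $(a,a+2)$, and a free arrangement has the same balanced splitting $e_1e_2=b_2$ on every line, so $\A$ cannot be free.

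Hypothesis (2), $|\A^H|=a+1$ with $b\ne a+2$, is the hard case. Now Proposition \ref{prop:Y1.23} does not determine the exponents, since $a+1$ points of total multiplicity $a+b$ need not fall into any of its three regimes; one only gets the Yoshinaga inequality $e_1e_2\le b_2=ab+1$, whence, by integrality and because $e_1e_2=ab+1$ with $e_1+e_2=a+b$ has an integer solution only when $b-a=2$, in fact $e_1e_2\le ab$, i.e. $e_1\le a$. The role of $b\ne a+2$ is precisely to delete the borderline free configuration that Proposition \ref{prop:Y1.23}(3) does cover: if all $a+1$ points of $\A^H$ were triple, it would give $exp=(a+1,a+1)$ and $e_1e_2=(a+1)^2=ab+1=b_2$, making $\A$ free. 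The genuine obstacle is the reverse inequality $e_1\ge a$ — equivalently $mdr(f)=a$ at the level of the \emph{generic} splitting — which may fail at the given $H$ when $H$ is a jumping line, so that the single inequality $\dim_\C N(f)\le b_2-e_1e_2$ is then too weak. I would reach it by descending to the deletion $\B=\A\setminus\{H\}$: deletion--restriction gives $\chi(\B,t)=\chi(\A,t)+\chi(\A^H,t)=(t-a)\bigl(t-(b-1)\bigr)+1$, again of near-free shape, and I would combine an induction on $|\A|$ with the nearly-free deletion--restriction results in the spirit of Theorem \ref{thm:5.11_del_restr} to pin $mdr(f)=a$ and then conclude as in the first paragraph. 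Carrying out this reduction — in particular reconciling the count $|\A^H|=a+1$ with the exponent bookkeeping and verifying the inductive hypotheses for $\B$ — is the step I expect to be hardest.
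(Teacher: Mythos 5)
This statement is imported verbatim from \cite[Thm 5.8]{AD}; the paper you are working from gives no proof of it, so there is no internal argument to compare yours against, and your proposal has to stand on its own. Its skeleton is the right one, and for hypothesis (1) it is essentially complete: Proposition \ref{prop:Y1.23}(2) with $n=|\A^H|=b+1$ and $m=a+b$ does give $\exp(\A^H,m^H)=(m-n+1,n-1)=(a,b)$ (the condition $n\ge \frac m2+1$ being exactly $b\ge a$), Yoshinaga's criterion then rules out freeness since $b_2-e_1e_2=ab+1-ab=1\neq 0$, and your explanation of the hypothesis $b\neq a+2$ --- it excludes precisely the configuration of Proposition \ref{prop:Y1.23}(3), where all $a+1$ points on $H$ are triple, $\exp(\A^H,m^H)=(a+1,a+1)$ and $e_1e_2=(a+1)^2=ab+1=b_2$ would force $\A$ to be free --- is exactly the right reading of that assumption.

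There are nonetheless two genuine problems. First, your key inequality is asserted for the \emph{total} dimension $\dim_\C N(f)$, and in that form it is false: the correct Abe--Dimca-type bound controls $\nu(\A)=\max_k\dim N(f)_k$, not $\sum_k\dim N(f)_k$. The conclusion of the theorem itself refutes your version: a nearly free arrangement with $nexp=(d_1,d_2)$ has $\dim N(f)_k=1$ in every degree of an interval whose length grows with $d_2-d_1$ (see \cite{DS}), so with $nexp=(a,b+1)$ the total dimension of $N(f)$ exceeds $1$, contradicting your claimed bound $\dim_\C N(f)\le b_2-e_1e_2=1$. What you actually need, namely $\dim N(f)_k\le 1$ for all $k$ together with $N(f)\neq 0$, does follow from the bound on $\nu(\A)$, so this is a repairable misstatement --- but the sentence ``$N(f)$ is a nonzero module of total dimension $1$'' must go. Second, and more seriously, hypothesis (2) is not proved. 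You correctly identify that when $|\A^H|=a+1$ Proposition \ref{prop:Y1.23} no longer pins down $\exp(\A^H,m^H)$, and that everything reduces to showing $e_1=a$ (equivalently controlling $mdr(f)$), but the route you propose --- passing to $\B=\A\setminus\{H\}$, computing $\chi(\B,t)=(t-a)(t-b+1)+1$, and running an induction through nearly-free deletion--restriction --- is only announced, with the decisive step explicitly flagged as not carried out. Since case (2) is half the theorem, the proposal does not yet constitute a proof.
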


Lastly, we need to recall some combinatoric ingredients.
We will denote by $n_k$ the number of points of multiplicity $k$ of $\A$. Some restrictions apply to these multiplicities, for instance the easily deducible equality:
	
\begin{equation}
\label{eq:suma_comb}
{d \choose 2} = \Sigma_{k=2}^{d} n_k{k \choose 2}
\end{equation}
A highly non-trivial restriction on the multiplicities is given by the Hirzebruch inequality (provided that $n_d=n_{d-1}=0$, see \cite{H}):

\begin{equation}
\label{eq:hirzebruch}
n_2 +\frac{3}{4}n_3 \geq d+ \Sigma_{k \geq 5}(k-4)n_k
\end{equation}

\section{Terao conjecture for $13$ lines arrangements}

\begin{theorem}
\label{thm:main}
 Terao conjecture is true for arrangements of 13 lines in the complex projective plane.
\end{theorem}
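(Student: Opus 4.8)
The plan is to reduce the statement to Yoshinaga's criterion, Proposition~\ref{prop:Y1.47}. Since the existence of a line $H$ whose Ziegler multirestriction $(\A^H, m^H)$ satisfies one of the three conditions of Proposition~\ref{prop:Y1.23} is a property of the intersection lattice alone, it suffices to prove the following combinatorial claim: \emph{every free arrangement $\A$ of $13$ lines admits such a line $H$}. Indeed, once this is known, Proposition~\ref{prop:Y1.47} shows that freeness of $\A$ forces freeness of every arrangement lattice-isomorphic to it, which is exactly Terao's conjecture in this cardinality. The first step is to make the three conditions explicit. For a line $H$, the Ziegler multiplicities are $m_p = k_p - 1$, where $p$ runs over the points of $\A$ lying on $H$ and $k_p$ denotes the multiplicity of $p$; since the remaining $12$ lines each meet $H$ in exactly one such point, $m = \sum_p m_p = 12$. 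Writing $n = |\A^H|$ for the number of points on $H$, condition~(1) reads ``$H$ contains a point of multiplicity $\ge 7$'', condition~(2) reads ``$n \ge 7$'', and condition~(3) reads ``$H$ contains exactly $6$ points, all of them triple''.

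With this dictionary the easy cases are immediate. If $\A$ has a point of multiplicity $\ge 7$, then any line through it satisfies~(1); and if some line carries at least $7$ points, it satisfies~(2). We may therefore assume that every point of $\A$ has multiplicity at most $6$ and that every line of $\A$ carries at most $6$ points. In this regime the combinatorics is tightly constrained: equation~\eqref{eq:suma_comb} gives $\sum_{k=2}^{6}\binom{k}{2} n_k = 78$, freeness with exponents $(d_1,d_2)$, $d_1+d_2 = 12$, forces $d_1 d_2 = \sum_{k}(k-1)n_k - 12$ (equivalently the global Tjurina identity $\sum_k (k-1)^2 n_k = 144 - d_1 d_2$), and, since $n_k = 0$ for $k \ge 7$, Hirzebruch's inequality~\eqref{eq:hirzebruch} becomes $n_2 + \tfrac34 n_3 \ge 13 + n_5 + 2n_6$.

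The heart of the argument is this surviving case. Here the identity $\sum_{p \in H}(k_p-1) = 12$ with at most six summands forces the points on each line to have average multiplicity at least $3$: a line with exactly $6$ points either consists entirely of triple points, and then satisfies~(3), or else contains a point of multiplicity $\ge 4$ together with a double point, while a line with at most $5$ points necessarily contains a point of multiplicity $\ge 4$. I would now run over the finitely many admissible exponent pairs $(d_1,d_2)$ with $d_1 + d_2 = 12$ and, for each, over the solutions of \eqref{eq:suma_comb} and the Tjurina identity that are compatible with Hirzebruch's inequality, arguing in each case that some line must be all-triple with $6$ points, so that condition~(3) holds --- or else that one of~(1), (2) is forced after all, or that the putative configuration violates the freeness relation or \eqref{eq:hirzebruch} and so cannot occur.

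I expect this last case analysis to be the main obstacle. Dispatching points and lines of large multiplicity is routine, but ruling out ``spread-out'' free arrangements --- those whose lines all carry few, moderately-high-multiplicity points and which nonetheless evade all three Ziegler conditions --- requires playing the combinatorial identity and Hirzebruch's inequality off against the freeness (Tjurina) relation with some care. The hope is that the averaging bound on points per line, combined with \eqref{eq:hirzebruch}, collapses the problem to a short finite enumeration; making that enumeration airtight is where the real work lies.
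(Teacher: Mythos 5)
Your reduction to Proposition~\ref{prop:Y1.47} is logically sound as far as it goes, but it rests on the combinatorial claim that \emph{every} free arrangement of $13$ lines contains a line $H$ whose Ziegler multirestriction satisfies one of the three conditions of Proposition~\ref{prop:Y1.23}, and that claim is not established by your sketch --- nor can it be established by the enumeration you describe, because it is not forced by the constraints you invoke. Your dictionary is correct: with $m=12$ the three conditions become ``a point of multiplicity $\ge 7$ on $H$'', ``$|\A^H|\ge 7$'', and ``exactly $6$ points on $H$, all triple''. But the paper's Remark~\ref{remark:restrictions}(iii) exhibits thirteen residual line types (labelled $(a_0)$ through $(l)$), each with $5$ or $6$ points, maximal multiplicity $5$, and every $6$-point type containing a quadruple or quintuple point together with a double point --- exactly the ``spread-out'' lines you flag as the main obstacle. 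Lemma~\ref{lemma:2lines_6points} then shows that the counting identities (the Tjurina relation $n_2+4n_3+9n_4+16n_5=108$, equation~\eqref{eq:suma_comb}, and the per-line incidence counts) admit plenty of solutions in which \emph{every} line is of one of these types; the constraints do not collapse to force an all-triple $6$-point line, a $7$-point line, or a $7$-fold point. So the finite enumeration you hope will close the argument instead terminates in a nonempty family of admissible combinatorial types on which Yoshinaga's criterion is silent. (This is precisely why the authors need the rest of Section~3 at all.)

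The paper's actual proof uses Yoshinaga's criterion only to dispose of the easy cases (a line with $\ge 7$ points, or a line with only triple points), and then switches to a genuinely different mechanism for the residual types: Lemma~\ref{lemma:2lines_6points} produces a line $H$ with exactly $6$ multiple points whose removal leaves another such line; Theorem~\ref{thm:5.11_del_restr} converts freeness of $\A$ into near freeness of $\B=\A\setminus\{H\}$ with $\mathrm{nexp}=(6,6)$; Theorem~\ref{thm:thm5.8_AD} together with the lattice-invariance of the global Tjurina number transfers near freeness (with the same exponents) to the corresponding $12$-line subarrangement $\B'$ of any lattice-isomorphic $\A'$; and Theorem~\ref{thm:5.11_del_restr} applied in the reverse direction recovers freeness of $\A'$. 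Your proposal contains no substitute for this deletion--restriction step, so the gap is not a matter of ``making the enumeration airtight'' but of a missing idea: some argument other than Proposition~\ref{prop:Y1.47} is needed to handle free arrangements all of whose lines are of the types $(a_0)$--$(l)$.
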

For a line arrangement $\A$, we denote by $m(\A)$ the maximal multiplicity of the intersection points in $\A$.
To prove Terao's Conjecture in the case $d=13$, it is enough 
to only consider the case

\bigskip

($*$) $d_1=6$, $m(\A) \in \{4,5\}$ and any line in $\A$ contains at most 6 intersection points.

\bigskip

\noindent 
Indeed, the case $d_1 \leq 5$ follows from \cite[Corollary 5.5]{A}, and hence  the exponents of $\A$ can be assumed to be $d_1=d_2=6$. Assume that there is a line $L \in \A$ containing at least 7 points. Then \cite[Theorem 2.7]{ACKN} implies that in these conditions freeness is determined by the combinatorics.
Hence the Terao's conjecture holds for $\A$. When $m(\A) \leq 3$ we use \cite[Proposition 1.3]{D2} to see that there are no free arrangements in this case. And for $m(\A) \geq d_1=6$ we apply \cite[Corollary 1.4]{D2}.
 
From now on, unless otherwise stated, $\A$ is a 13 lines arrangement in the complex projective plane that has only multiple points of multiplicity up to $5$ and minimal degree relations $mdr =6$.  When $\A$ is free, this amounts to $d_1 = mdr = 6$.
We will prove the combinatorial nature of the freeness property in this setting.

\begin{prop}
\label{n_5=3}
 $n_5 \leq 3$ for arrangements of $13$ lines. 
\end{prop}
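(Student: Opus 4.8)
The plan is to bound $n_5$ by double counting the incidences between the multiplicity-$5$ points and the lines of $\A$, organizing these incidences in two ways: once as the lines through each point, and once as the pairs of points on each line. Write $a_L$ for the number of multiplicity-$5$ points lying on a line $L \in \A$. Since every multiplicity-$5$ point lies on exactly $5$ lines of $\A$, summing over the points gives $\sum_{L \in \A} a_L = 5 n_5$.

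Next I would set $S = \{ L \in \A : a_L \geq 1\}$, the set of lines carrying at least one quintuple point, so that $|S| \leq |\A| = 13$. Rearranging the incidence count yields $5n_5 - |S| = \sum_{L \in S}(a_L - 1)$, and the elementary inequality $\binom{a}{2} \geq a - 1$ (valid for every integer $a \geq 1$) then gives
\[
5 n_5 - 13 \;\leq\; 5 n_5 - |S| \;\leq\; \sum_{L \in \A}\binom{a_L}{2}.
\]
On the other hand, $\sum_{L \in \A}\binom{a_L}{2}$ counts the unordered pairs of multiplicity-$5$ points that lie on a common line of $\A$; since two distinct points determine a unique line, each such pair is counted at most once, whence $\sum_{L \in \A}\binom{a_L}{2} \le \binom{n_5}{2}$. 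Combining the two bounds produces the single quadratic inequality
\[
5 n_5 - 13 \;\leq\; \binom{n_5}{2}, \qquad\text{i.e.}\qquad n_5^2 - 11 n_5 + 26 \geq 0 .
\]

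To finish, I would observe that $n_5^2 - 11 n_5 + 26$ is negative precisely for $n_5$ strictly between its roots $(11 \pm \sqrt{17})/2 \approx 3.44$ and $7.56$, so the displayed inequality fails for every integer $n_5$ with $4 \le n_5 \le 7$. It then remains only to exclude $n_5 \geq 8$, which is immediate from \eqref{eq:suma_comb}: since all multiplicities are at most $5$, one has $78 = \binom{13}{2} \ge 10\, n_5$, forcing $n_5 \le 7$. Together these exclude all $n_5 \ge 4$, leaving $n_5 \le 3$. The only delicate point is the middle step, namely recognizing that the same incidence data, once reorganized by lines rather than by points, is controlled by $\binom{n_5}{2}$ through the bound $\binom{a}{2} \ge a-1$; everything else is bookkeeping, and the argument uses neither the Hirzebruch inequality \eqref{eq:hirzebruch} nor the hypothesis $mdr(f)=6$.
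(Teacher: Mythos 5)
Your proof is correct, but it takes a genuinely different route from the paper. The paper argues geometrically: it assumes $n_5\geq 3$, observes by pigeonhole that two quintuple points must lie on a common line of $\A$, and then runs a case analysis on the possible configurations of three quintuple points (all on one line, pairwise collinear, or attached to a pencil as in its Figure 1), concluding in each case that a fourth quintuple point cannot exist. Your double-counting argument replaces all of this with the single chain $5n_5-13\leq \sum_L (a_L-1)\leq \sum_L\binom{a_L}{2}\leq\binom{n_5}{2}$, whose resulting quadratic $n_5^2-11n_5+26\geq 0$ excludes $4\leq n_5\leq 7$, with $n_5\leq 7$ coming from \eqref{eq:suma_comb}. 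Each step checks out: $\binom{a}{2}-(a-1)=\tfrac{(a-1)(a-2)}{2}\geq 0$ for integers, and two distinct points span a unique projective line, so each pair of quintuple points is counted at most once on the right. Your approach is shorter, needs no figures, and generalizes immediately to bounding $n_k$ for $d$-line arrangements; the paper's approach, by contrast, yields explicit structural information about how three quintuple points can sit inside the arrangement (though that information is not reused later). One small remark: in excluding $n_5\geq 8$ you invoke ``all multiplicities are at most $5$,'' but this hypothesis is not needed --- \eqref{eq:suma_comb} has only non-negative terms, so $\binom{13}{2}\geq \binom{5}{2}n_5$ holds for any $13$-line arrangement, which means your proof establishes the proposition in full generality rather than only under the standing assumptions of Section 3.
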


\begin{proof}
We will call two points {\it collinear} if they are situated on a line in $\A$. \\

\noindent Assume $n_5 \geq 3$.  Then obviously there are at least two collinear quintuple points in the arrangement. Moreover, a third quintuple point should be collinear to at least one of the previous two collinear quintuple points (so, in any case, one of the configurations (a), (b) from Figure \ref{fig:3quintuple} happens, as subarrangements). If all three are situated on the same line (Figure \ref{fig:3quintuple}  (a)), then immediately $n_5=3$.\\
 Otherwise, assume no three quintuple points are collinear. If there are three quintuple points in the arrangement such that each pair of two are collinear (Figure \ref{fig:3quintuple} (c)), then it is immediate that $n_5=3$. So, assume none of the previous two situations (Figure \ref{fig:3quintuple} (a) or  (c)) occur. We have then a pencil of five lines with base point $Q_0$, and two of the lines in the pencil, $d_1, \;  d_2$ each must contain an additional quintuple point, $Q_1, \; Q_2$, such that  $Q_1, \; Q_2$ are non-collinear in $\A$. $Q_2$ contains the four lines that are not part of the quintuple points on the line $d_1$ (see Figure \ref{fig:3quintuple} (d)). The existence of an additional quintuple point $Q_3$ is then not possible.
\end{proof}

\begin{figure}[h]
\label{fig:3quintuple}
\begin{tikzpicture}[scale=0.75]
\draw[style=thick] (-12,1) -- (-4,1);
\node at (-10,1){$\bullet$};
\draw[style=thick, rotate around={-36: (-10,1)}] (-11,1) -- (-9,1);
\draw[style=thick, rotate around={-72: (-10,1)}] (-11,1) -- (-9,1);
\draw[style=thick, rotate around={-108: (-10,1)}] (-11,1) -- (-9,1);
\draw[style=thick, rotate around={-144: (-10,1)}] (-11,1) -- (-9,1);
\node at (-8,1){$\bullet$};
\draw[style=thick, rotate around={-36: (-8,1)}] (-9,1) -- (-7,1);
\draw[style=thick, rotate around={-72: (-8,1)}] (-9,1) -- (-7,1);
\draw[style=thick, rotate around={-108: (-8,1)}] (-9,1) -- (-7,1);
\draw[style=thick, rotate around={-144: (-8,1)}] (-9,1) -- (-7,1);
\node at (-6,1){$\bullet$};
\draw[style=thick, rotate around={-36: (-6,1)}] (-7,1) -- (-5,1);
\draw[style=thick, rotate around={-72: (-6,1)}] (-7,1) -- (-5,1);
\draw[style=thick, rotate around={-108: (-6,1)}] (-7,1) -- (-5,1);
\draw[style=thick, rotate around={-144: (-6,1)}] (-7,1) -- (-5,1);
\node at (-9,-1.5) {(a)};
\draw[style=thick] (0,1) -- (6,1);
\node at (2,1){$\bullet$};
\draw[style=thick, rotate around={-36: (2,1)}] (1,1) -- (3,1);
\draw[style=thick, rotate around={-72: (2,1)}] (1,1) -- (3,1);
\draw[style=thick, rotate around={-108: (2,1)}] (1,1) -- (5,1);
\draw[style=thick, rotate around={-144: (2,1)}] (1,1) -- (3,1);
\node at (4,1){$\bullet$};
\draw[style=thick, rotate around={-36: (4,1)}] (3,1) -- (5,1);
\draw[style=thick, rotate around={-72: (4,1)}] (3,1) -- (5,1);
\draw[style=thick, rotate around={-108: (4,1)}] (3,1) -- (5,1);
\draw[style=thick, rotate around={-144: (4,1)}] (3,1) -- (5,1);
\node at (1.35,-1) {$\bullet$};
\draw[style=thick] (0.35,-1) -- (2.35,-1);
\draw[style=thick, rotate around={-36: (1.35,-1)}] (0.35,-1) -- (2.35,-1);
\draw[style=thick, rotate around={-72: (1.35,-1)}] (0.35,-1) -- (2.35,-1);
\draw[style=thick, rotate around={-144: (1.35,-1)}] (0.35,-1) -- (2.35,-1);
\node at (4,-1.5) {(b)};

\draw[style=thick] (-12,-7.5) -- (-6,-7.5);
\node at (-11,-7.5){$\bullet$};
\node at (-7,-7.5){$\bullet$};
\node at (-9,{-7.5+2*sqrt(3)}){$\bullet$};
\draw[style=thick, rotate around={60: (-11,-7.5)}] (-12,-7.5) -- (-6,-7.5);
\draw[style=thick, dashed, rotate around={15: (-11,-7.5)}] (-12,-7.5) -- (-10,-7.5);
\draw[style=thick, dashed, rotate around={30: (-11,-7.5)}] (-12,-7.5) -- (-10,-7.5);
\draw[style=thick, dashed, rotate around={45: (-11,-7.5)}] (-12,-7.5) -- (-10,-7.5);
\draw[style=thick, rotate around={-60: (-7,-7.5)}] (-12,-7.5) -- (-6,-7.5);
\draw[style=thick, dashed, rotate around={-15: (-7,-7.5)}] (-8,-7.5) -- (-6,-7.5);
\draw[style=thick, dashed, rotate around={-30: (-7,-7.5)}] (-8,-7.5) -- (-6,-7.5);
\draw[style=thick, dashed, rotate around={-45: (-7,-7.5)}] (-8,-7.5) -- (-6,-7.5);
\draw[style=thick, dashed] (-9,{-6.5+2*sqrt(3)}) -- (-9,{-8.5+2*sqrt(3)});
\draw[style=thick, dashed, rotate around={-15: (-9,{-7.5+2*sqrt(3)})}] (-9,{-6.5+2*sqrt(3)}) -- (-9,{-8.5+2*sqrt(3)});
\draw[style=thick, dashed, rotate around={15: (-9,{-7.5+2*sqrt(3)})}] (-9,{-6.5+2*sqrt(3)}) -- (-9,{-8.5+2*sqrt(3)});
\node at (-9,-8.5) {(c)};

\draw[style=thick] (0,-5) -- (7,-5);
\node at (0.2,-4.7){$d_1$};
\node at (2.2,-3.4){$d_2$};
\node at (2,-5){$\bullet$};
\node at (3.2,-4.5){$Q_0$};
\draw[style=thick, rotate around={-36: (2,-5)}] (1,-5) -- (3,-5);
\draw[style=thick, rotate around={-72: (2,-5)}] (1,-5) -- (3,-5);
\draw[style=thick, rotate around={-108: (2,-5)}] (0,-5) -- (6,-5);
\draw[style=thick, rotate around={-144: (2,-5)}] (1,-5) -- (3,-5);
\node at (5.5,-5){$\bullet$};
\node at (6.9,-4.5) {$Q_2$};
\draw[style=thick, rotate around={-36: (5.5,-5)}] (4.5,-5) -- (6.5,-5);
\draw[style=thick, rotate around={-72: (5.5,-5)}] (4.5,-5) -- (6.5,-5);
\draw[style=thick, rotate around={-108: (5.5,-5)}] (4.5,-5) -- (6.5,-5);
\draw[style=thick, rotate around={-144: (5.5,-5)}] (4.5,-5) -- (6.5,-5);
\node at (1.35,-7) {$\bullet$};
\node at (2.8,-6.8) {$Q_1$};
\draw[style=thick, dashed] (0.35,-7) -- (2.35,-7);
\draw[style=thick, dashed, rotate around={-36: (1.35,-7)}] (0.35,-7) -- (2.35,-7);
\draw[style=thick, dashed, rotate around={-72: (1.35,-7)}] (0.35,-7) -- (2.35,-7);
\draw[style=thick, dashed, rotate around={-144: (1.35,-7)}] (0.35,-7) -- (2.35,-7);
\node at (4,-8.5) {(d)};

\end{tikzpicture}
\caption{$3$ quintuple points configurations}
\end{figure}
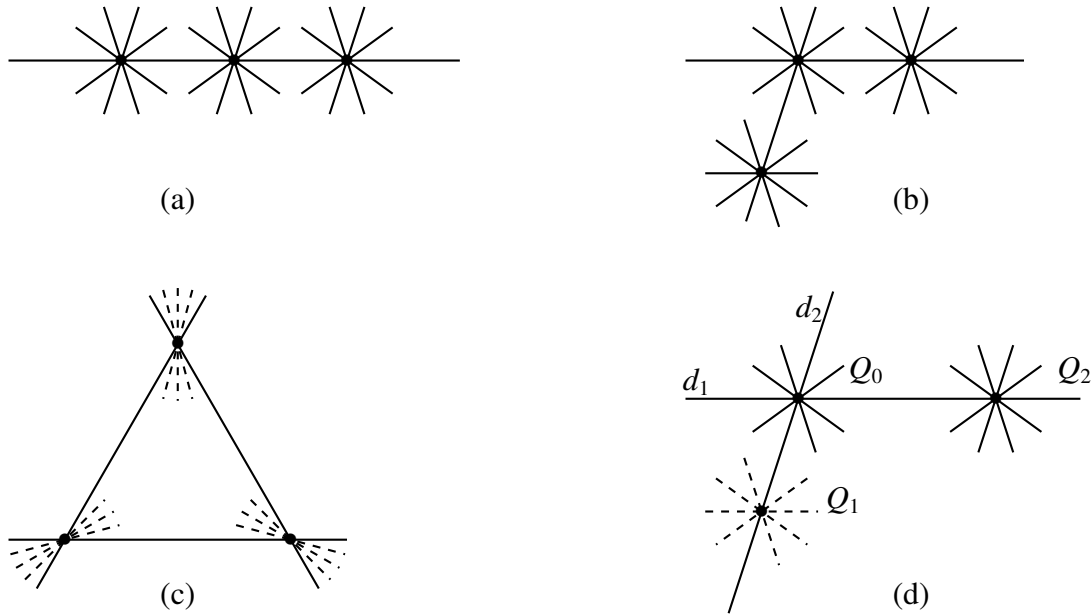

\begin{remark}
\label{remark:restrictions}
Let $\A$ be a free  arrangement of $13$ lines. 
\begin{enumerate}
\item[(i)] 
One can assume that all the lines in $\A$ contain at least $5$ and at most $6$ multiple points:  if the arrangement has a line with at least $7$ multiple points then, by \cite[Propositions 1.47, 1.23(ii)]{Y}, the freeness depends only on the lattice isomorphism class; on the other hand,  if the arrangement has a line with at most $4$ multiple points, since its characteristic polynomial is a perfect square, any other arrangement $\A'$ in the lattice isomorphism class of $\A$ is either free or nearly free, by \cite[Theorem 1.5]{AD}. Let $\A'$ be such a nearly free arrangement. Since $\A$ and $\A'$ are lattice isomorphic, they have the same Tjurina number, hence the only possibility for $\A'$ is to have exponents $(5,8)$. Were the multiplicity of $\A'$ equal to $5$, \cite[Cor.1.7]{D2} would imply $\A$ is also nearly free, contradiction. In conclusion, when $m(\A) = 5$, we can assume that $\A$ does not have lines  with at most $4$ multiple points.

\item[(ii)] 
By \cite[Propositions 1.47, 1.23(iii)]{Y} for arrangements with a line that only contains triple points (hence $6$ triple points) the freeness depends only on the lattice isomorphism class. 
\item[(iii)] 
By $(i)$ and $(ii)$, to test the Terao conjecture for arrangements of $13$ lines one only needs to look at arrangements $\A$ for which each line $ H \in \A$ is in one of the following situations, where $n^H_i$ is the number of multiple points of multiplicity $i$ on $H$:
\begin{enumerate}
\item[($a_0$)] $n^H_5 = 0, \;n^H_4 = 4, \;  n^H_3 = 0, \; n^H_2 = 0$
\item[(a)] $n^H_5 = 0, \;n^H_4 = 1, \;  n^H_3 = 4, \; n^H_2 = 1$
\item[(b)] $n^H_5 = 0, \;n^H_4 = 2, \;  n^H_3 = 3, \; n^H_2 = 0$
\item[(c)] $n^H_5 = 0, \;n^H_4 = 2, \;  n^H_3 = 2, \; n^H_2 = 2$
\item[(d)] $n^H_5 = 0, \;n^H_4 = 3, \;  n^H_3 = 1, \; n^H_2 = 1$
\item[(e)] $n^H_5 = 0, \;n^H_4 = 3, \;  n^H_3 = 0, \; n^H_2 = 3$ 
\item[(f)]$n^H_5 = 1, \;n^H_4 = 0, \;  n^H_3 = 3, \; n^H_2 = 2$ 
\item[(g)] $n^H_5 = 1, \;n^H_4 = 0, \;  n^H_3 = 4, \; n^H_2 = 0$ 
\item[(h)] $n^H_5 = 1, \;n^H_4 = 1, \;  n^H_3 = 1, \; n^H_2 = 3$ 
\item[(i)] $n^H_5 = 1, \;n^H_4 = 1, \;  n^H_3 = 2, \; n^H_2 = 1$
\item[(j)] $n^H_5 = 1, \;n^H_4 = 2, \;  n^H_3 = 0, \; n^H_2 = 2$ 
\item[(k)] $n^H_5 = 2, \;n^H_4 = 0, \;  n^H_3 = 0, \; n^H_2 = 4$
\item[(l)] $n^H_5 = 2, \;n^H_4 = 0, \;  n^H_3 = 1, \; n^H_2 = 2$ 
\end{enumerate}

Notice that $\sum_i n_i^H = |\A^H|$ and in fact $n_i^H= |(m^H)^{-1}(i-1)|$, that is,  the cardinal of the pre-image of $i-1$ through the multiplication map $m^H$ defined by \eqref{formula:m^H}.
\end{enumerate}
\end{remark}

To prove Theorem \ref{thm:main}, it is enough to see that the following property holds.
\begin{lemma}
\label{lemma:2lines_6points}
For $\A$ as above, there exist two lines in $\A$, each containing $6$ multiple points, such that the two lines do not intersect in a double point. 
\end{lemma}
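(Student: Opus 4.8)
The plan is to translate the statement into a question about concurrences and then argue by contradiction, combining the list of admissible line types in Remark~\ref{remark:restrictions} with the incidence identities \eqref{eq:suma_comb} and \eqref{eq:hirzebruch} and the bound of Proposition~\ref{n_5=3}. First I would record the bookkeeping. By Remark~\ref{remark:restrictions}(iii) the lines carrying exactly six multiple points are precisely those of types $(a),(c),(e),(f),(h),(k)$; denote their number by $N_6$ and, for $j\in\{4,5,6\}$, let $N_j$ be the number of lines $H$ with $|\A^H|=j$, so that $N_4+N_5+N_6=13$ and $N_4=0$ when $m(\A)=5$. The key reformulation is that any two distinct lines meet in a single point, which fails to be a double point exactly when it has multiplicity $\ge 3$; hence the lemma is equivalent to the existence of a point of multiplicity $\ge 3$ lying on two of the six-point lines.

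Next I would assume, for contradiction, that every point of multiplicity $\ge 3$ lies on at most one six-point line. Fixing a six-point line $H$, all remaining six-point lines then cross $H$ in distinct double points, so their number is at most $n_2^H$; since $n_2^H\le 4$ for each of the six admissible types, this forces $N_6\le 5$ and, moreover, shows the six-point lines present must be of the double-point-rich types. It therefore suffices to contradict this by proving $N_6\ge 6$, equivalently by ruling out every admissible type distribution with $N_6\le 5$. For the lower bound I would use the incidence count $\sum_H|\A^H|=\sum_k k\,n_k$ together with \eqref{eq:suma_comb} to express $N_5+2N_4$, and hence $N_6$, through $n_3,n_4,n_5$, and then feed in \eqref{eq:hirzebruch} and the bound $n_5\le 3$ of Proposition~\ref{n_5=3} to constrain these multiplicities. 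I would run this separately in the two regimes $m(\A)=5$ (where $N_4=0$ and there is a quintuple point, through which the five lines are all of types with $n_5^H\ge 1$, so that finding two of the six-point types $(f),(h),(k)$ through it already finishes) and $m(\A)=4$ (the same scheme with quadruple points and the types $(a_0),(a),(b),(c),(d),(e)$).

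The step I expect to be the main obstacle is exactly the lower bound on $N_6$. The aggregate identities \eqref{eq:suma_comb}, \eqref{eq:hirzebruch} and $n_5\le 3$ are by themselves too weak: a short linear-programming check shows they admit numerical solutions with $N_6$ arbitrarily small, so no purely global count produces $N_6\ge 6$. Closing the gap requires descending from aggregate multiplicity counts to the explicit multiset of line types, and using local configuration constraints in the spirit of the proof of Proposition~\ref{n_5=3} — for instance, that the lines through a fixed high-multiplicity point force specific types on one another, and that in the extremal case $N_6=5$ every six-point line must be of type $(k)$, which is highly restrictive. Organizing this finite but delicate case check so that every admissible distribution is eliminated, in each of the regimes $m(\A)=5$ and $m(\A)=4$, is the technical heart of the argument.
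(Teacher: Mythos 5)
Your reformulation is exactly right: since any two of the thirteen lines meet in a unique point, the lemma is equivalent to finding two six-point lines whose common point has multiplicity at least $3$, and your reduction to a lower bound on $N_6$ (via the observation that under the negation every other six-point line must cross a fixed six-point line $H$ in one of its at most $n_2^H\le 4$ double points, forcing $N_6\le 5$) is the same first step the paper takes. But the proof is not complete, and the missing ingredient is not among the tools you list. The paper does not use the Hirzebruch inequality \eqref{eq:hirzebruch} here at all; the decisive extra global constraint is the \emph{freeness} of $\A$, which by \cite[Cor.~1.2]{D1} (maximality of the global Tjurina number) gives the quadratic identity $n_2+4n_3+9n_4+16n_5=(d-1)^2-d_1d_2=108$. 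Coupled with \eqref{eq:suma_comb} (i.e.\ $n_2+3n_3+6n_4+10n_5=78$), with the incidence counts expressing each $n_k$ in terms of the numbers of lines of each admissible type from Remark~\ref{remark:restrictions}(iii), and with the case split $n_5\in\{0,1,2,3\}$ from Proposition~\ref{n_5=3}, this pins down $N_6$ by pure linear algebra: one gets $N_6\ge 5$ when $n_5=0$ (and $\ge 6$ if a line of type $(a_0)$ is present), $N_6\ge 6$ when $n_5=1$, $N_6\ge 7$ when $n_5=2$, and $N_6\ge 10$ when $n_5=3$. Since type $(k)$ --- the only six-point type with $n_2^H=4$ --- requires $n_5\ge 2$, these bounds exceed the thresholds $4$ (for $n_5\le 1$), resp.\ $5$ (for $n_5\ge 2$), coming from your double-point count, and the contradiction follows with no local configuration analysis whatsoever.

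You correctly diagnose that \eqref{eq:suma_comb}, \eqref{eq:hirzebruch} and $n_5\le 3$ alone admit solutions with $N_6$ small, but the conclusion you draw --- that one must descend to a delicate local case analysis in the spirit of Proposition~\ref{n_5=3} --- is where the argument stalls: that programme is only sketched, the claimed rigidity statements (e.g.\ that in the extremal case all six-point lines are of type $(k)$) are not established, and no mechanism is offered for actually eliminating the remaining type distributions. As written, the proposal proves the easy implication ($N_6$ large implies the lemma) and leaves the hard implication ($N_6$ is in fact large) open. The fix is to add the Tjurina-number equation to your linear system; everything else in your setup then goes through.
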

\begin{proof}
Recall that $\A$ is as described by Remark \ref{remark:restrictions}, that is, a free $13$ lines arrangement such that each line is of one of the types ($a_0$) through (l).
 Denote the number of lines of type ($a_0$), (a), (b), (c), (d), (e), (f), (g), (h), (i), (j), (k), (l) in $\A$ by 
 $a_0, a, b, c, d, e, f, g, h, i, j, k, l$.
  Since $\A$ is free, it follows from \cite[Cor 1.2]{D1} that $n_2+4n_3+9n_4+16n_5=108$ and by \eqref{eq:suma_comb} that  $n_2+3n_3+6n_4+10n_5=78$.
  We make a discussion on the number of quintuple points of the arrangement, with the purpose of counting the number of lines in $\A$ having $6$ multiple points. To do that, we solve in each case a system of linear equations (including the two above) with $n_i, \; 2 \leq i \leq 4,$ and  $a_0, a, b, c, d, e, f, g, h, i, j, k, l$ as variables. The third, fourth and fifth equations of the system are a count for the number of double, triple, respectively quadruple points of the arrangement. \\
  
    A quick look at the list in Remark \ref{remark:restrictions}(iii) shows that, if $n_5 \leq 1$ and $\A$ has at least $5$ lines  having $6$ multiple points then by removing one of those lines one is left with an arrangement containing at least another line with $6$ multiple points. When $n_5 \geq 2$, it is enough for $\A$ to contain at least $6$  lines with $6$ multiple points to reach the same conclusion.

 {\it Case $n_5 = 0$} Solving the linear system \\

\noindent $|\; n_2+4n_3+9n_4=108 \\
|\; n_2+3n_3+6n_4=78 \\
|\; -2n_2+3e+d+2c+a=0 \\
|\; -3n_3+d+3b+2c+4a=0 \\
|\; -4n_4+4a_0+a+2c+2b+3d+3e=0 \\
|\; a_0+a+b+c+d+e=13 
$\\

\noindent shows that the number of lines of the arrangement with at most $5$ multiple points (that is, the sum $b+d+a_0$) is equal to $12-n_4-a_0$. If $a_0 >0$, this implies $n_4 \geq 4$, hence $b+d+a_0 \leq 7$, so there are at least $6$ lines in the arrangement having $6$ multiple points. If $a_0 = 0$ we get $b+d = 8- \frac{n_2}{3}$, so there are at least $5$ lines in the arrangement having $6$ multiple points.

Recall that, by Remark \ref{remark:restrictions}(i) when $m(A) = 5$, then there are no lines in $\A$ of type $(a_0)$.

{\it Case $n_5 = 1$} Consider the solution for the linear system:\\

\noindent$|\; n_2+4n_3+9n_4=92\\
|\; n_2+3n_3+6n_4=68\\
|\; -2n_2+3e+d+2c+a+2f+3h+2j+i=0\\
|\; -3n_3+d+3b+2c+4a+3f+4g+h+2i=0\\
|\; -4n_4+a+2c+2b+3d+3e+h+2j+i=0\\
|\; a+b+c+d+e+f+g+h+i+j=13\\
|\; f+g+h+i+j=5
$\\

 The number of lines of the arrangement with $6$ multiple points (that is, the sum $a+c+e+f+h$) is equal to $4+n_4$. Moreover, $n_2 \geq 0$ implies $n_4 \geq 2$, hence  there are at least $6$ lines in the arrangement having $6$ multiple points.
 
 {\it Case $n_5 = 2$} We have to solve the linear system:\\

\noindent $|\;  n_2+4n_3+9n_4=76\\
|\; n_2+3n_3+6n_4=58\\
|\; -2n_2+3e+d+2c+a+2f+3h+2j+4k+2l+i=0\\
|\; -3n_3+d+3b+2c+4a+3f+4g+h+l+2i=0\\
|\; -4n_4+a+2c+2b+3d+3e+h+2j+i=0\\\
|\; a+b+c+d+e+f+g+h+i+j+k+l=13\\
|\; f+g+h+i+j+2k+2l= 10
$\\
 
One obtains that the total number of lines having $5$ multiple points, the sum $b+d+g+i+j+l$,  equals $6-n_4$, so the number of lines having $6$ multiple points must be at least $7$.

{\it Case $n_5 = 3$} We have to solve the linear system:\\

\noindent $|\; n_2+4n_3+9n_4=60\\
|\; n_2+3n_3+6n_4=48\\
|\; -2n_2+3e+d+2c+a+2f+3h+2j+4k+2l+i=0\\
|\; -3n_3+d+3b+2c+4a+3f+4g+h+l+2i=0\\
|\; -4n_4+a+2c+2b+3d+3e+h+2j+i=0\\
|\; a+b+c+d+e+f+g+h+i+j+k+l=13\\
|\; f+g+h+i+j+2k+2l= 15
$\\

One obtains that the total number of lines having $5$ multiple points, the sum $d+b+g+j+l+i$,  equals $3-n_4$, so the number of lines having $6$ multiple points must be at least $10$.

\end{proof}

\noindent {\it Proof  of Theorem \ref{thm:main}}\\

To prove Theorem \ref{thm:main}, we can assume that $\A$ is a free arrangement as described by Remark 
\ref{remark:restrictions}: 

\begin{enumerate}

\item $\A$ is a 13 lines arrangement in the complex projective plane.

\item  $\A$ has only multiple points of multiplicity up to $5$ and minimal degree relations $d_1=mdr =6$; this implies that the characteristic polynomial of $\A$ is $\chi_{\A}(t)=(t-6)^2$.

\item The lines in $\A$ contain either $5$ or $6$ multiple points (all lines are of type (a) to (l) as listed above), except when $m(\A) = 4$, in which case $\A$ may also contain lines of type $(a_0)$. 

\end{enumerate}

The above lemma immediately implies that $\A$ contains a line with $6$ multiple points such that if one removes that line from the arrangement the resulting $12$ lines arrangement still contains a line with $6$ multiple points.\\

In this hypothesis, one may apply Theorem \ref{thm:5.11_del_restr} to $\A$ and obtain a $12$ lines nearly free subarrangement $\B = \A \setminus \{H\}$ ( $H$ line with $6$ multiple points in $\A$) with nexp $(6, 6)$ such that  $\B$ still contains a line with $6$ multiple points. Consider another arrangement $\A'$ in the lattice isomorphism class of $\A$. We need to show that $\A'$ is also free. \\

Let $\B'$ be the subarrangement of $\A'$ obtained by removing the line $H'$ corresponding to $H$ through the lattice isomorphism, that is $\B$ and $\B'$ are lattice isomorphic. Then $\B'$ has characteristic polynomial $(t-5)(t-6)+1$ (Proposition \ref{prop:DS}). We apply Theorem \ref{thm:thm5.8_AD} to $\B'$, which contains a line with $6$ multiple points, to deduce that $\B'$ is also nearly free. By \cite[Cor 3.5]{DS}, two nearly free curves with the same degree and the same global Tjurina number have the same exponents, so $\B'$ has the same exponents as $\B$, that is $\B' = (6, 6)$.
Now again by Theorem \ref{thm:5.11_del_restr}, it follows that $\A'$ is free.

\section{Combinatorial freeness for $14$ lines arrangements via Combinatorial near freeness for up to $13$ lines arrangements}

The previous approach proves very fruitful. By a similar line of reasoning, this free/nearly free interplay can be used to show that near freeness is combinatorial, at least for arrangements of up to a certain cardinal.

\begin{prop} 
\label{prop:sufficient_cond}
Let $\A$ be a nearly free arrangement of at most $14$ lines with exponents $d_1 \leq  d_2$. 
 If there is a line in $\A$ having precisely $(d_2+1)$ multiple points, then all arrangements lattice isomorphic to $\A$ are also nearly free.
\end{prop}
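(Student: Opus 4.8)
The plan is to reduce everything to Theorem~\ref{thm:thm5.8_AD} by exploiting that the quantities in sight are lattice invariants. First I would record that for a line arrangement the characteristic polynomial $\chi(\A)$, the global Tjurina number $\tau(\A)=\sum_p (m_p-1)^2$ (summed over the singular points $p$ of multiplicity $m_p$), and the number $|\A^H|$ of points lying on each line $H$ all depend only on $L(\A)$. Hence any $\A'$ lattice isomorphic to $\A$ satisfies $\chi(\A')=\chi(\A)=(t-d_1)(t-d_2+1)+1$ by Proposition~\ref{prop:DS}, has $\tau(\A')=\tau(\A)$, and contains a line $H'$ (the image of $H$) with $|\A'^{H'}|=d_2+1$. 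The statement becomes: any line arrangement with this characteristic polynomial, this Tjurina number, and such a line $H'$ is nearly free.

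Next I would apply Theorem~\ref{thm:thm5.8_AD} to $\A'$. Writing $\chi(\A')=(t-a)(t-b)+1$ with $a\le b$ and $a+b=|\A'|-1$ forces $\{a,b\}=\{d_1,d_2-1\}$. In the balanced case $d_1=d_2$ the larger root is $b=d_2$, so the hypothesis $|\A'^{H'}|=d_2+1=b+1$ is exactly condition~(1) of Theorem~\ref{thm:thm5.8_AD}, and $\A'$ is nearly free at once. I expect this to be the principal case.

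When $d_1<d_2$ one has $b=d_2-1$, so $H'$ carries $d_2+1=b+2$ points, one too many for Theorem~\ref{thm:thm5.8_AD} to apply to $\A'$ directly, and I would set up an induction on the number of lines. Deleting $H$ gives $\B:=\A\setminus\{H\}$ with $|\B|=d-1$; the expectation is that $\B$ is nearly free with $nexp(\B)=(d_1,d_2-1)$, which is degree consistent since $d_1+(d_2-1)=|\B|$. If some line of $\B$ still carries $(d_2-1)+1=d_2$ points, then $\B$ satisfies the hypothesis of this very Proposition for the smaller pair $(d_1,d_2-1)$, so by the inductive hypothesis every arrangement lattice isomorphic to $\B$ is nearly free; in particular $\B':=\A'\setminus\{H'\}$ is nearly free, with exponents pinned to $(d_1,d_2-1)$ via \cite[Cor.~3.5]{DS} (equal degree and Tjurina number force equal exponents). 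Adding $H'$ back and invoking the addition counterpart of Theorem~\ref{thm:5.11_del_restr} from \cite{AD} would then force $\A'$ nearly free, with $\tau(\A')=\tau(\A)$ confirming that its exponents are $(d_1,d_2)$. The induction decreases $d_2$ toward $d_1$ while keeping the hypothesis valid, terminating at the balanced base case handled above by Theorem~\ref{thm:thm5.8_AD}; alternatively, since $|\B|\le 13$, if $\B$ turns out free one may substitute the Terao property now available for at most $13$ lines (Theorem~\ref{thm:main}) for the inductive step.

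The main obstacle is precisely this unbalanced case. I would have to establish the deletion statement itself --- that removing the heavy line $H$, with its $d_2+1$ points, yields a nearly free $\B$ with $nexp(\B)=(d_1,d_2-1)$ --- which is where the degree bookkeeping must be kept exact (a free deletion would have exponents summing to $d_1+d_2-2$, a nearly free one to $d_1+d_2-1$), and I would have to verify combinatorially that a line with $d_2$ points survives in $\B$, since only intersection points on $H$ of multiplicity at least three persist after deletion. The accompanying addition result producing near freeness upon restoring $H'$ is the third delicate ingredient; the balanced case $d_1=d_2$, by contrast, is immediate.
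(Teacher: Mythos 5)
Your reduction to lattice invariants and your balanced case are fine: when $d_1=d_2$ the larger root of $\chi(\A')=(t-a)(t-b)+1$ is $b=d_2$, so $|\A'^{H'}|=d_2+1=b+1$ and Theorem~\ref{thm:thm5.8_AD}(1) gives near freeness of $\A'$ at once (a shortcut the paper does not even need). The genuine gap is the case $d_1<d_2$, where your entire induction hangs on the expectation that $\B=\A\setminus\{H\}$ is nearly free with $nexp(\B)=(d_1,d_2-1)$. That expectation is false. Deleting a line $H$ carrying $k$ points from a $d$-line arrangement lowers the global Tjurina number by $\sum_{p\in H}\bigl((m_p-1)^2-(m_p-2)^2\bigr)=2(d-1+k)-3k=2d-2-k$; with $k=d_2+1$ and $\tau(\A)=(d-1)^2-d_1(d_2-1)-1$ this gives $\tau(\B)=(d-2)^2-(d_1-1)(d_2-1)$, which is exactly the maximal Tjurina number of a \emph{free} arrangement of $d-1$ lines with exponents $(d_1-1,d_2-1)$, and which exceeds by $d_2-d_1>0$ the value $(d-2)^2-d_1(d_2-2)-1$ that a nearly free $\B$ with your proposed exponents would be forced to have. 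So the deletion statement you say you ``would have to establish'' cannot be established; the correct statement is that $\B$ is free.

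That correct statement is precisely the tool the paper uses, namely \cite[Thm 5.10]{AD}, the companion of Theorem~\ref{thm:5.11_del_restr} with the roles of free and nearly free exchanged: any two of ``$\A$ nearly free with $nexp(\A)=(d_1,d_2)$'', ``$\B$ free with $exp(\B)=(d_1-1,d_2-1)$'' and ``$|\A^H|=d_2+1$'' imply the third. With it the proof is uniform in $d_1,d_2$ and needs no induction: $\B$ is free with at most $13$ lines, so $\B'=\A'\setminus\{H'\}$, being lattice isomorphic to $\B$, is free by Theorem~\ref{thm:main} with the same exponents, and a second application of \cite[Thm 5.10]{AD} to $\B'$ together with $|\A'^{H'}|=d_2+1$ yields that $\A'$ is nearly free. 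You do gesture at this in your last sentence (``if $\B$ turns out free''), but only as a fallback inside an inductive scheme that additionally requires the unproved combinatorial claim that a line with $d_2$ points survives in $\B$ --- a condition the correct argument never needs.
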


\begin{proof}
The proof is immediate, and it relies on the combinatoriality of the freeness property of arrangements of up to $13$ lines (Theorem \ref{thm:main}) and \cite[Thm 5.10]{AD}. Take a nearly free arrangement $\A$ and another arrangement $\A'$ lattice isomorphic to $\A$. We need to prove that $\A'$ is also nearly free.
Let $H \in \A$ be such that $|\A^H| = d_2+1$ and $H' \in \A'$ the line corresponding to $H$ through the lattice isomorphism. Then $|\A'^{H'}| = d_2+1$ also. Since $\B = \A \setminus \{H\}$ is free by  \cite[Thm 5.10]{AD} and it is lattice isomorphic to  $\B' = \A' \setminus \{H'\}$, then $\B'$ is also free, by \ref{thm:main}. \cite[Thm 5.10]{AD} now completes the proof, showing that $\A'$ must be nearly free as well.
\end{proof}

It is also worth stating the following easy consequence of \cite[Thm 5.8]{AD}.

\begin{remark} 
\label{rem:sufficient_cond}
Let $\A$ be a nearly free arrangement with exponents $d_1 \leq  d_2$.
If one of the below properties holds:
\begin{itemize}
 \item[(i)] $d_1=d_2$ and there is a line in $\A$ having precisely $d_1$ or $d_1+1$ multiple points,
 \item[(ii)] $d_1 < d_2, \; d_1 +3 \neq d_2$ and there is a line in $\A$ having precisely $d_1+1$ or $d_2$ multiple points,s
 \item[(iii)] $d_1 +3 = d_2$ and there is a line in $\A$ having precisely $d_2$ multiple points,
 \end{itemize}
 then all arrangements lattice isomorphic to $\A$ are also nearly free.
 \end{remark}
 
 Actually, the condition in Proposition \ref{prop:sufficient_cond} ensures us that the nearly free arrangement
 was obtained by the addition of a line with a certain number of multiple points to a free arrangement. In fact, for arrangements with up to $7$ lines, this seems to always be the case.
 
 \begin{prop}
 \label{prop:addition}
 Let $\A$ be a nearly free line arrangement with exponents $d_2 \leq d_2$ and at most $7$ lines. Then $\A$ is obtained by the addition of a line $H$ to a free arrangement such that $H$ intersects the free arrangement in $d_2+1$ points.
 \end{prop}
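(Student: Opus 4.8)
The plan is to reduce the statement to a finite, completely explicit combinatorial verification. Since $\A$ is nearly free with exponents $d_1 \leq d_2$ and $|\A| \leq 7$, Proposition~\ref{prop:DS} gives $d_1 + d_2 = |\A|$, so the possible exponent pairs are tightly constrained: for each value of $d:=|\A| \in \{3,4,5,6,7\}$ there are only finitely many pairs $(d_1,d_2)$ with $d_1 \leq d_2$ and $d_1+d_2=d$. For each such pair I would show that $\A$ necessarily contains a line $H$ with exactly $|\A^H| = d_2+1$ multiple points, and then invoke the converse direction of Theorem~\ref{thm:5.11_del_restr}: with $\B = \A \setminus \{H\}$, conditions~(2) ($\B$ nearly free with the appropriate exponents) and~(3) ($|\A^H|=d_1$, after relabeling) would force~(1), namely that $\B$ is free. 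The cleaner route, however, is to use \cite[Thm 5.10]{AD} directly (as in the proof of Proposition~\ref{prop:sufficient_cond}): the existence of a line with $|\A^H|=d_2+1$ on a nearly free $\A$ means $\A$ arises by adding $H$ to the free arrangement $\B$, which is exactly the desired conclusion.

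So the real content is the existence claim: every nearly free arrangement of at most $7$ lines has a line meeting the rest in precisely $d_2+1$ points. First I would compute, for each admissible $(d_1,d_2)$, the global combinatorial invariants forced by near freeness — in particular the characteristic polynomial $\chi(\A;t) = (t-d_1)(t-d_2+1)+1$ from Proposition~\ref{prop:DS}, which determines $b_1 = |\A|-1$ and $b_2$, and hence (via the standard formulas relating the coefficients of $\chi$ to the $n_k$, together with the counting identity~\eqref{eq:suma_comb}) strong restrictions on the multiplicity vector $(n_2,n_3,\dots)$. For the very small cardinalities this essentially pins down the combinatorics: for $d=3,4$ the only nearly free arrangements are small enough to enumerate by hand, and for $d=5,6,7$ the Hirzebruch inequality~\eqref{eq:hirzebruch} (where applicable) plus the Tjurina-number constraint sharply limit the lattices. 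In each surviving case I would exhibit the line with $d_2+1$ points by a direct examination of the restriction multiplicities $|\A^H| = \sum_i n_i^H$, using the fact that $\sum_{H} |\A^H|$ is itself a combinatorial quantity determined by the $n_k$.

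The main obstacle I anticipate is not any single deep step but the case analysis itself: one must be certain the enumeration of nearly free lattices for $d \leq 7$ is complete, and that no nearly free arrangement sneaks through with all lines satisfying $|\A^H| \neq d_2+1$. The delicate cases are the balanced ones, $d_1 = d_2$ (possible for $d=6$, giving $(3,3)$), where Remark~\ref{rem:sufficient_cond}(i) shows that a line with $d_1$ \emph{or} $d_1+1$ points already suffices for combinatoriality but the present proposition demands specifically $d_2+1$; and the near-balanced case $d_1+3 = d_2$ flagged in Remark~\ref{rem:sufficient_cond}(iii), where having a line with only $d_1+1$ points is genuinely insufficient and one truly needs a line with $d_2$ points. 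I would therefore pay closest attention to whether the borderline exponent pairs for $d=7$ (namely $(1,6)$, $(2,5)$, $(3,4)$) can occur as nearly free arrangements at all, and verify by explicit inspection that whenever they do, the required line is present. I expect the bound $|\A|\leq 7$ to be exactly the threshold below which this existence always holds, which is why the statement is phrased as ``this seems to always be the case'' only in that range.
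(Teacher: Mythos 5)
Your proposal is correct and follows essentially the same route as the paper: a complete finite classification of the nearly free arrangements with at most $7$ lines, followed by a case-by-case check that each one has a line meeting the rest in exactly $d_2+1$ points (equivalently, via \cite[Thm 5.10]{AD} as in Proposition~\ref{prop:sufficient_cond}, that each arises by adding such a line to a free arrangement). The only organizational difference is that the paper runs the enumeration over the known lattice isomorphism types from \cite{DIM} (supersolvable, nearly supersolvable, generic, $L(d,m)$, $\tilde L(m_1,m_2)$, plus the two sporadic $7$-line arrangements of Figure~\ref{fig:7lines_arr}), identifying the nearly free ones via \cite[Thm 1.3]{D1} and then verifying the addition structure directly on explicit realizations, whereas you organize the search by exponent pairs and multiplicity vectors; both reduce to the same finite inspection, and your instinct to invoke \cite[Thm 5.10]{AD} rather than Theorem~\ref{thm:5.11_del_restr} is the right one, since in the latter the free member is the larger arrangement, which is the wrong way around here.
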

 
 \begin{proof}
 We make a complete inventory of lattice isomorphism classes of line arrangements $\A$ with $|\A| \leq 7$ (along with their realisation spaces), and we look for the ones that may contain nearly free arrangements. \\
 
 It is fairly easy to see that near freeness is combinatorial for arrangements of up to $7$ lines. Take a nearly free such arrangement $\A$ with exponents $d_1 \leq d_2$. Then $d_1 \in \{2,3\}$ and  obviously the inequality \ref{eq:Cor1.7D2} is satisfied unless $\A$ is generic, Proposition \ref{prop:Cor1.7D2} thus proving our claim. If $\A$ is generic, then, by \cite[Prop. 4.7(4)]{DIM}, near freeness is again combinatorial: if $|\A| = 4$, then the lattice of $\A$ is $L(4,2)$ (in the notations from \cite[Sect.4]{DIM}, $L(d,m)$ is the intersection lattice for a line arrangement having only double points, except a multiple point of multiplicity $m$) and all arrangements in the lattice isomorphism class of $\A$  are nearly free, whereas generic arrangements with cardinal at least $5$ are never nearly free, by the same \cite[Prop. 4.7(4)]{DIM}.\\
 
 Keep in mind that freeness is combinatoric in this range, i.e., once a lattice isomorphism class contains a free arrangement, all other arrangements in the same class are also free.\\
  
 One can only find nearly free arangements for $|\A| \geq 4$, so we look at the intersection lattice types for arrangements $\A, \; 4 \leq |\A| \leq 7$.
 
 For $|\A| = 4$, apart from the generic arrangement, all other arrangements (up to lattice isomorphism) are free; as recalled above, the generic one is nearly free.
 
 For $|\A|  = 5$, all arrangements (up to lattice isomorphism) are either supersolvable, nearly supersolvable (see \cite{DS2} for the definition) or generic. It is well known that arrangements with supersolvable lattices are free (ref). According to \cite[Thm. 4.3]{DS2}, nearly supersolvable arrangements are either free or nearly free, and the choice among the two is also combinatorial.
 
 For $|\A|  = 6$, apart from supersolvable, nearly supersolvable and generic type we have a lattice of type $\tilde L(m_1, m_2)$ (in the notations from \cite[Sect.4]{DIM}, it is the intersection lattice of a line arrangement that consists of two pencils of $m_1 \geq 2$, respectively $m_2 \geq 2$ lines that intersect generically).

A slightly more elaborate approach is necessary for $|\A| = 7$. One computes for an arrangement of each lattice isomorphism type the minimal degree relations and identifies the nearly free (lattice isomorphism classes of) arrangements by the characterisation of near freeness in \cite[Thm 1.3]{D1}. There are, up to lattice isomorphism, $4$ nearly free arrangements: one nearly supersolvable, one of type $L(7,5)$ and the two arrangements in Figure \ref{fig:7lines_arr} below.\\

Finally, up to lattice isomorphism, we have one nearly free arrangement $|\A|$ with $|\A| = 4$, one nearly free arrangement $|\A|$ with $|\A| = 5$, two nearly free arrangements $|\A|$ with $|\A| = 6$ and $4$ nearly free arrangements $|\A|$ with $|\A| = 7$. The exponents $d_1 \leq d_2$ are easily computed. It is easy to see that these arrangements all satisfy our claim,  that is,  all are obtained by the addition of a line to a free (actually, supersolvable) arrangement, a line that intersects the free arrangement in $d_2+1$ points.
 \end{proof}

 \begin{remark}
 \label{rem:7linii}
In the course of the proof of the previous proposition we found two nearly free arrangements with $7$ lines, having the same pattern of multiple points, $n_3 = 5, \; n_2 = 6, n_{>3}=0$, and the same exponents, $(3,4)$, and both can be obtained by the addition of a line $H$ to the same free $6$ lines arrangement, such that the said line intersects the free arrangement in $5$ distinct points. {\it The two arrangements are not lattice isomorphic.} They are pictured in Figure \ref{fig:7lines_arr} (where $H$ is the dashed line). The arrangement $\A_1$ can described by the equations $xyz(x-y)(x+z)(y+z)(x+ay+z)$, where $a$ a complex number $\neq 0,1$, while  the arrangement $\A_2$ can be described by the equations $xyz(x-cy)(x+z)(y+z)(x-cy+(1-c)z)$, where $c$ a complex number $\neq 0,1$.
 \end{remark}

\begin{figure}[h]
\label{fig:7lines_arr}
\begin{tikzpicture}[scale=0.5]

\draw[style=thick] (-12,1) -- (-3,1);
\node at (-10,1){$\bullet$};
\node at (-6.75,1){$\bullet$};

\draw[style=thick, rotate around={36: (-10,1)}] (-12,1) -- (-3,1);
\draw[style=thick, rotate around={56: (-10,1)}] (-12,1) -- (1,1);
\draw[style=thick, rotate around={74: (-6.75,1)}] (-8,1) -- (3,1);
\draw[style=thick, rotate around={90: (-6.75,1)}] (-12.5,1) -- (-1,1);
\draw[style=thick, rotate around={115: (-4.5,1)}] (-6,1) -- (2,1);
\node at (-6.75,5.8){$\bullet$};
\node at (-5.9,4){$\bullet$};
\node at (-4.35,9.4){$\bullet$};
\draw[style=thick, dashed, rotate around={80: (-5.8,1)}] (-11.5,1) -- (4,1);
\node at (-10,-1.5) {($\A_1$)};
\node at (-5.5,0.1) {$H$};

\draw[style=thick] (4,1) -- (13,1);
\node at (6,1){$\bullet$};
\node at (9.25,1){$\bullet$};

\draw[style=thick, rotate around={36: (6,1)}] (4,1) -- (13,1);
\draw[style=thick, rotate around={56: (6,1)}] (4,1) -- (17,1);
\draw[style=thick, rotate around={74: (9.25,1)}] (8,1) -- (19,1);
\draw[style=thick, rotate around={90: (9.25,1)}] (8,1) -- (15,1);
\draw[style=thick, rotate around={115: (11.5,1)}] (10,1) -- (18,1);
\node at (9.25,5.8){$\bullet$};
\node at (10.1,4){$\bullet$};
\node at (9.2,3.3){$\bullet$};
\draw[style=thick, dashed, rotate around={142: (12.2,1)}] (11,1) -- (20,1);
\node at (6,-1.5) {($\A_2$)};
\node at (13.5,0.1) {$H$};
\end{tikzpicture}
\caption{$2$ nearly free arrangements}
\end{figure}
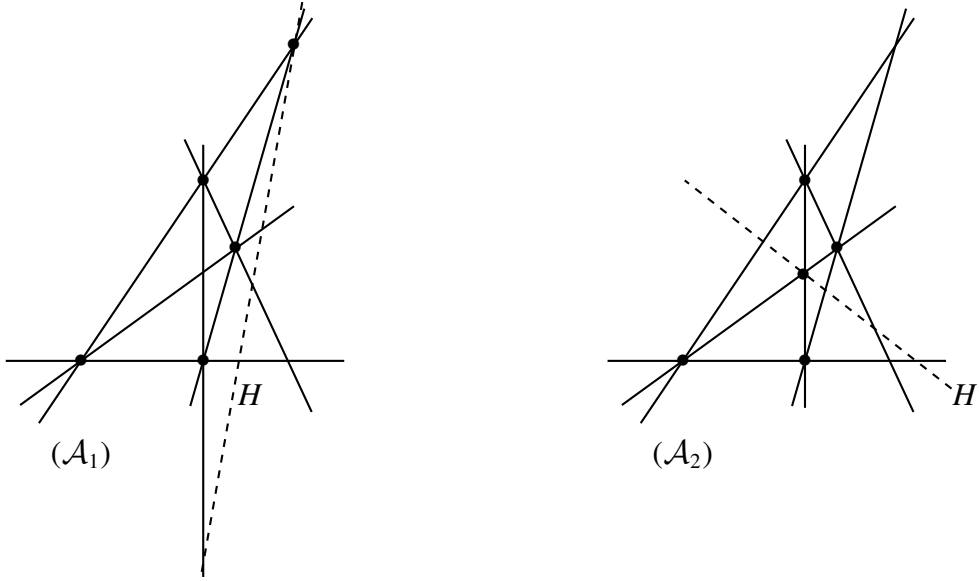

 \begin{prop}
 \label{prop:multiarr}
 Assume $\A, \; |\A|=2d+1$, is a nearly free arrangement with exponents $(d, d+1)$ and there is a line $H$ in $\A$ containing precisely $d$ triple points. Then any other arrangement in the lattice isomorphism class of $\A$ is again nearly free. 
 \end{prop}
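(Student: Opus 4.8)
The plan is to extract from the distinguished line $H$ a Ziegler multirestriction whose exponents are rigidly fixed by the combinatorics, and to pair this with the (lattice invariant) characteristic polynomial in order to detect near freeness directly, mimicking Yoshinaga's freeness criterion of Proposition \ref{prop:Y1.47}. The first step is the local analysis at $H$. Since $|\A| = 2d+1$, the line $H$ is met by the other $2d$ lines, each in a single point; the hypothesis that $H$ carries precisely $d$ triple points means these points already account for all $2d$ lines (two per point), so $H$ meets $\A$ only in these $d$ triple points. Hence the Ziegler multirestriction $(\A^H, m^H)$ is supported on exactly $d$ points, each of multiplicity $m^H = 3-1 = 2$, and Proposition \ref{prop:Y1.23}(3) gives $exp(\A^H, m^H) = (d,d)$. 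On the global side, since $nexp(\A) = (d,d+1)$, Proposition \ref{prop:DS} yields $\chi(\A;t) = (t-d)^2 + 1$, so $b_2(\A) = d^2 + 1$.

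Next I would transport this data along a lattice isomorphism. Let $\A'$ be lattice isomorphic to $\A$, and let $H'$ be the line matched with $H$. Both the multiplicities of the points on a line and the characteristic polynomial are determined by $L(\A)$, so $H'$ again carries exactly $d$ triple points and nothing more; thus $(\A'^{H'}, m^{H'})$ is once more $d$ points of multiplicity $2$, with $exp(\A'^{H'}, m^{H'}) = (d,d)$ by Proposition \ref{prop:Y1.23}(3), while $\chi(\A';t) = (t-d)^2 + 1$ and $b_2(\A') = d^2 + 1$. In particular $\A$ and $\A'$ share the same Ziegler exponents and the same $b_2$. I would emphasise that this is exactly the case left untouched by Proposition \ref{prop:sufficient_cond} and Remark \ref{rem:sufficient_cond}, since here $|\A^H| = d = d_1$ rather than $d_1+1$ or $d_2$; it is precisely the fact that $H$ meets $\A$ only in triple points that lets Proposition \ref{prop:Y1.23}(3) apply and rigidify the restriction.

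It remains to convert this combinatorial coincidence into near freeness of $\A'$, and this is where the argument must go beyond the literally stated results. Yoshinaga's theorem underlying Proposition \ref{prop:Y1.47} characterises freeness of a plane arrangement by the Ziegler restriction having the expected exponents together with the matching value $b_2 = d_1 d_2$; the same local-to-global comparison, carried out with the one-dimensional defect recorded in Proposition \ref{prop:DS}, characterises near freeness with $nexp = (d,d+1)$ by the two conditions $exp(\A^H, m^H) = (d,d)$ and $b_2 = d^2 + 1$, the extra $+1$ being exactly the gap from the free value $d^2$. Invoking this near free refinement of Yoshinaga's criterion (which I would either cite from \cite{AD} or reprove by the method of Proposition \ref{prop:Y1.47}), both of whose hypotheses hold for $\A'$ by the previous paragraph, forces $\A'$ to be nearly free, as required.

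The main obstacle is exactly this last ingredient: Proposition \ref{prop:Y1.47} as stated transfers only freeness, so I must establish that, once Proposition \ref{prop:Y1.23}(3) pins the Ziegler exponents to $(d,d)$, the single value $b_2 = d^2 + 1$ suffices to force $N(f)$ to be one-dimensional in exactly one degree rather than to vanish, with no finer, potentially non-combinatorial, data of the restriction intervening. Showing that the defect from freeness is governed by $b_2$ alone in this balanced situation $(d,d)$ — one step away from the free case — is the delicate point on which the proof turns.
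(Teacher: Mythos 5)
Your argument is essentially the paper's proof: the authors likewise note that $H'$ carries exactly the $d$ triple points, apply Proposition \ref{prop:Y1.23}(3) to pin the Ziegler exponents of $(\A'^{H'}, m^{H'})$ to $(d,d)$, record $\chi(\A';t)=(t-d)^2+1$ as a lattice invariant, and conclude by the nearly free analogue of Yoshinaga's criterion --- which is precisely \cite[Thm 5.5]{AD}. The ingredient you flag as the delicate final step is therefore already available in the literature and is exactly the reference the paper invokes, so no further argument is needed.
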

 
 \begin{proof}
 Any arrangement $\A'$ in the lattice isomorphism class of $\A$ has characteristic polynomial $\chi(\A', t) = (t-d)^2+1$. Let $H'$ be the line in $\A'$ corresponding to $H$ through the lattice isomorphism and $exp(\A'^{H'}, m^{H'})$  be  the exponents of the Ziegler restriction of $\A'$ to $H'$. 
 By \cite[Prop 1.23(iii)]{Y}, $exp(\A'^H, m^{H'}) = (d,d)$. Apply now \cite[Thm 5.5]{AD} to conclude that $\A'$ must also be nearly free.
 \end{proof}

\begin{prop}
\label{prop:upper_limit_points}
Let $\A$ be a nearly free arrangement with exponents $d_1 \leq d_2$. Then for any $H \in \A, \; n_H \leq d_2+1$.
\end{prop}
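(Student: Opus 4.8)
The plan is to bound $n_H$, the number of multiple points lying on a fixed line $H\in\A$, by relating it to the exponents of the Ziegler multirestriction $(\A^H, m^H)$, which by Proposition \ref{prop:Y1.23} is always free in $\C^2$ with a well-understood pair of exponents. First I would recall that for a nearly free arrangement the total exponents satisfy $d_1+d_2=|\A|$ (Proposition \ref{prop:DS}), so that $d_2\geq |\A|/2$. The key observation is that $n_H=|\A^H|$ is precisely the number of points of $\A^H$, i.e. the cardinality $n$ appearing in Proposition \ref{prop:Y1.23}, while $m=\sum_{X\in\A^H}m^H(X)=|\A|-1$ by the definition \eqref{formula:m^H} of the multiplicity (each of the other $|\A|-1$ lines meets $H$ in exactly one of those points, contributing its share). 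Thus the combinatorics of $H$ is encoded in a $2$-multiarrangement whose weights sum to $|\A|-1$ and whose number of lines is $n_H$.

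The main step is to run a deletion argument. I would set $\B=\A\setminus\{H\}$ and apply Theorem \ref{thm:5.11_del_restr}, or rather its near-free analogues in \cite{AD}, to the pair $(\A,\B)$; the point is that the Ziegler multirestriction of $\A$ onto $H$ governs whether the addition of $H$ to $\B$ preserves or changes freeness/near freeness, and Yoshinaga's theory (the results quoted as Propositions \ref{prop:Y1.23} and \ref{prop:Y1.47}) ties the exponents of $(\A^H,m^H)$ to the exponents of $\A$. Concretely, suppose for contradiction that $n_H\geq d_2+2$. Then in Proposition \ref{prop:Y1.23} we would be in case (2), since $n=n_H\geq d_2+2\geq \tfrac{|\A|}{2}+1=\tfrac{m+1}{2}+1>\tfrac{m}{2}+1$, giving $\exp(\A^H,m^H)=(m-n+1,\,n-1)$. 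I would then compare this forced exponent pair with what near freeness of $\A$ with $\exp(\A)=(d_1,d_2)$ permits for the Ziegler restriction, and extract a numerical contradiction with $d_1+d_2=|\A|$ and $d_1\leq d_2$.

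The hard part will be pinning down exactly which result from \cite{AD} or \cite{Y} controls the Ziegler exponents of a nearly free arrangement: for free arrangements Ziegler's theorem says the multirestriction exponents are a sub-pair of the exponents of $\A$, and I expect the near-free version (the kind of statement behind Theorem \ref{thm:thm5.8_AD} and \cite[Thm 5.5, 5.8, 5.10]{AD}) to force $\exp(\A^H,m^H)$ to be close to $(d_1,d_2-1)$ or $(d_1,d_2)$, so that the larger Ziegler exponent cannot exceed $d_2$. Once that is in hand, the inequality $n-1=n_H-1\leq d_2$ follows directly, which is exactly the claim $n_H\leq d_2+1$. The delicate point is that the multirestriction has total weight $|\A|-1=d_1+d_2-1$ rather than $|\A|$, so I must track the off-by-one carefully and make sure the case $n_H=d_2+1$ (equality, which does occur by Proposition \ref{prop:sufficient_cond} and Remark \ref{rem:7linii}) is allowed while $n_H=d_2+2$ is excluded. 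I would close by checking this boundary case against the small-arrangement inventory in Proposition \ref{prop:addition} to confirm the bound is sharp and consistently attained.
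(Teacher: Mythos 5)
Your proposal is correct and follows essentially the same route as the paper: apply Proposition \ref{prop:Y1.23}(2) to the Ziegler multirestriction (with $m=|\A|-1$, $n=n_H$) to force exponents $(m-n_H+1,\,n_H-1)$ when $n_H$ is large, then contradict the constraint on the Ziegler exponents of a nearly free arrangement. The one ingredient you leave unpinned is exactly \cite[Thm 5.3]{AD}, which states that the Ziegler restriction of a nearly free arrangement with exponents $(d_1,d_2)$ has exponents $(d_1-1,d_2)$ or $(d_1,d_2-1)$ --- matching what you anticipated, so the larger exponent is at most $d_2$ and $n_H\leq d_2+1$ follows as you describe.
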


\begin{proof}
By \cite[Thm 5.3]{AD}, any Ziegler restriction $(\A^H, m^H))$ has exponents either $(d_1-1, d_2)$ or $(d_1, d_2-1)$. On the other hand, if an $H$ such that $n_H > d_2+1$ would exist, then its Ziegler restriction would have as exponents $(d-n_H, n_H-1)$, by \cite[Prop 1.22(ii)]{Y}, so either $d-n_H = d_1-1$ or $d-n_H = d_1$, that is, $n_H  \in \{d_2, d_2+1\}$, contradiction to the assumption $n_H > d_2+1$.
\end{proof}

Before stating our next result, we need to recall two results, stating sufficient, respectively necessary conditions for the combinatoriality  of near freeness, results that we will extensively use further.

\begin{prop}\cite[Cor. 1.7]{D2}
\label{prop:Cor1.7D2}
Let $\A$ be a nearly free line arrangement with exponents $d_1 \leq d_2$. If 
\begin{equation}
\label{eq:Cor1.7D2}
m(\A) \geq d_1
\end{equation}
then any other line arrangement in the lattice isomorphism class of $\A$ is also nearly free.
\end{prop}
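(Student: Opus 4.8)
The plan is to isolate a single numerical invariant, $r':=mdr(f')$, of an arbitrary arrangement $\A'$ with $L(\A')\cong L(\A)$, to pin it down combinatorially, and only then to recognise near freeness through a du~Plessis--Wall type criterion. Recall that $L(\A)$ determines the numbers $n_k$ of points of multiplicity $k$, hence the global Tjurina number $\tau(\A)=\sum_{k\ge 2}n_k(k-1)^2$, the maximal multiplicity $m(\A)$, and the characteristic polynomial $\chi(\A;t)$. Since $\A$ is nearly free with $nexp=(d_1,d_2)$, it satisfies $mdr(f)=d_1$ and, by Proposition~\ref{prop:DS},
\[
\tau(\A)=(d-1)(d_2-1)+d_1^2-1=:\tau_0,\qquad d:=d_1+d_2=|\A|.
\]
Consequently every $\A'$ in the class has $\tau(\A')=\tau_0$, $m(\A')=m(\A)\ge d_1$, and $\chi(\A';t)=(t-d_1)(t-d_2+1)+1$. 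The whole argument therefore reduces to proving that $r'=d_1$: granting this, the du~Plessis--Wall description of free and nearly free curves in the range $2r'\le d$ (free exactly when $\tau=(d-1)(d-r'-1)+r'^2$, nearly free exactly when that value drops by one) identifies $\A'$ as nearly free, because $\tau_0$ is precisely $(d-1)(d-d_1-1)+d_1^2-1$, one less than the free value.

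To control $r'$ I would use the maximal–multiplicity hypothesis. A point of multiplicity $m=m(\A')$ carries the pencil of its $m$ lines, which by a standard construction provides a Jacobian relation of degree $d-m$; hence
\[
r'\le d-m(\A')\le d-d_1=d_2.
\]
The task is thus to squeeze $r'$ down to $d_1$. Writing $\tau_{\max}(d,r)=(d-1)(d-r-1)+r^2$, which is decreasing in $r$ as long as $2r\le d-2$ with consecutive gap $\tau_{\max}(d,r)-\tau_{\max}(d,r+1)=d-2r-2$, I would compare $\tau(\A')=\tau_0=\tau_{\max}(d,d_1)-1$ with the du~Plessis--Wall bounds $(d-1)(d-r'-1)\le\tau(\A')\le\tau_{\max}(d,r')$ (valid for $2r'<d$). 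The upper bound forces out the possibilities $r'>d_1$ once the exponents are spread out ($d_2-d_1>3$), while the lower bound forces out $r'<d_1$ once $d_2>d_1(d_1-1)$; together with the ceiling $r'\le d_2$ this already pins $r'=d_1$ outside a short list of near–balanced configurations.

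The main obstacle is exactly this near–balanced range, where the coarse du~Plessis--Wall inequalities leave several candidate values of $r'$ (including the possibility $2r'>d$, outside the clean regime), and where the boundary case $d_2=d_1+2$ excluded in Theorem~\ref{thm:thm5.8_AD} lives. Resolving it requires the sharp relation between maximal multiplicity and $mdr$: when the point $p$ realising $m=m(\A)\ge d_1$ is large enough one has the equality $mdr(f)=d-m$, so that $r'=d-m$ becomes genuinely combinatorial and equal to $d_1$. I would try to establish this by a Ziegler–restriction computation, choosing a line $H\ni p$, for which the multirestriction $(\A^H,m^H)$ has total multiplicity $d-1$ and a distinguished block of size $m-1\ge d_1-1$; whenever it falls into a case of Proposition~\ref{prop:Y1.23} its exponents $(m-1,d-m)$ are forced combinatorially, and feeding these into \cite[Thm 5.5]{AD} yields near freeness of $\A'$ directly. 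The delicate point---and the crux---is that Proposition~\ref{prop:Y1.23}(1) only applies once $2m>d$, so for $d_1\le m\le d/2$ one cannot read the multiexponents off the combinatorics alone; there the lower bound $mdr(f)\ge d-m$ must instead be extracted from the explicit structure of the pencil through $p$ together with the fixed value $\tau_0$, closing the remaining cases by hand.
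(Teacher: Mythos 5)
This proposition is quoted in the paper from \cite[Cor.~1.7]{D2} without proof, so the comparison has to be made against the argument in that source rather than against anything in the present text. Your reduction is the right one: since all singularities of a line arrangement are ordinary multiple points, $\tau(\A')=\sum_k n_k(k-1)^2$ is a lattice invariant, and by the characterization of near freeness via the maximal Tjurina number (\cite[Thm~1.3]{D1}) it suffices to pin down $r'=mdr(f')$ for every realization $\A'$. The problem is that you never actually pin it down. The du~Plessis--Wall inequalities leave, as you concede, a ``short list of near-balanced configurations''; the Ziegler-restriction route via Proposition~\ref{prop:Y1.23}(1) requires $2m>d$, i.e.\ $m>d/2$, whereas the hypothesis only gives $m\ge d_1\le d/2$; and the fallback you propose for the remaining range --- establishing a lower bound $mdr(f)\ge d-m$ from the pencil through $p$ --- is not available: the pencil gives only the upper bound $mdr(f)\le d-m$, and the inequality $mdr(f)\ge d-m$ is simply false in general (e.g.\ the Hesse arrangement has $d=12$, $m=4$, $mdr=4<8=d-m$). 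So the crux of the statement, which is exactly the regime $d_1\le m\le d/2$, is deferred to an unspecified case-by-case argument.

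The missing ingredient is the dichotomy theorem of \cite{D2}: for a line arrangement with a point of multiplicity $m=m(\A)$, either $mdr(f)=d-m$ or $mdr(f)\ge m-1$. This is what makes $mdr$ combinatorially controllable in the presence of a point of multiplicity $\ge d_1$: for any $\A'$ in the lattice class one gets either $r'=d-m\le d_2$ or $r'\ge m-1\ge d_1-1$, and feeding these finitely many possibilities into the fixed value $\tau(\A')=\tau(d,d_1)_{\max}-1$ together with the monotonicity of $r\mapsto\tau(d,r)_{\max}$ and the upper bound $r'\le d-m$ eliminates everything except $r'=d_1$ (up to the boundary checks). Without this dichotomy --- which is a genuinely nontrivial geometric input about pencils and Jacobian syzygies, not something recoverable from the numerical inequalities you list --- the argument does not close, so as written the proposal is an outline with a real gap rather than a proof.
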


\begin{prop}\cite[Prop. 1.3]{D2}
\label{prop:Prop1.3D2}
Let $\A$ be a line arrangement with $|\A| = d$. If $\A$ is free or nearly free with exponents $d_1 \leq d_2$, then 
\begin{equation}
\label{eq:ineqProp1.3D2}
m(\A) \geq \frac{2d}{d_1+2}
\end{equation}
\end{prop}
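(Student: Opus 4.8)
The plan is to first rephrase the inequality in terms of the minimal syzygy degree. For both free and nearly free line arrangements the smaller exponent equals the minimal degree of a Jacobian relation, $d_1 = mdr(f)$; thus, writing $m = m(\A)$, the claim $m \geq \frac{2d}{d_1+2}$ is equivalent, after clearing denominators, to $mdr(f)\cdot m \geq 2(d-m)$, i.e. to $mdr(f) \geq \frac{2(d-m)}{m}$. So the free/nearly free hypothesis enters only to identify $d_1$ with $mdr(f)$ (and to guarantee $d_1 \leq d_2$, so that it is indeed the smaller exponent that appears), and the real content is a lower bound on $mdr(f)$ for the defining polynomial $f$ in terms of the maximal multiplicity.

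To prove this bound I would fix a point $p$ of multiplicity $m$ and choose coordinates with $p = (0:0:1)$. Then the $m$ lines through $p$ have equations not involving $z$, so their product is $g = g(x,y)$ of degree $m$, while the remaining $d-m$ lines give $h$ of degree $d-m$, with $f = gh$ and $\gcd(g,h) = 1$; after a generic linear change of coordinates one may also assume $\gcd(h,h_z) = 1$. Given a Jacobian relation $(a,b,c)$ of degree $r$, substituting $f_x = g_x h + g h_x$, $f_y = g_y h + g h_y$ and $f_z = g h_z$ gives
\begin{equation*}
h(a g_x + b g_y) + g(a h_x + b h_y + c h_z) = 0,
\end{equation*}
whence $g \mid a g_x + b g_y$. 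This divisibility is exactly the condition that the planar field $\theta = a\partial_x + b\partial_y$ be tangent to each of the $m$ concurrent lines, i.e. logarithmic for $g$. Since $m$ concurrent lines form a free arrangement with exponents $(1,m-1)$, one gets $(a,b) = (Ax + B\theta_g^x,\, Ay + B\theta_g^y)$ for a degree-$(m-1)$ generator $\theta_g$ and polynomials $A,B$ with $\deg A = r-1$, $\deg B = r-m+1$; writing $a g_x + b g_y = g u$, the relation collapses to $a h_x + b h_y + c h_z = -uh$.

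The heart of the argument is then to extract a lower bound on $r$ from this surviving equation together with $\gcd(h,h_z)=1$, which forces $h_z \mid (a h_x + b h_y + uh)$. In the degenerate branch $B=0$ one has $u = mA$, and the Euler identity for $h$ reduces the condition to $h_z \mid A$, so that $r \geq d-m \geq \frac{2(d-m)}{m}$ (using $m \geq 2$); this branch is harmless and in fact explains the equality cases for near-pencils, where $\deg h_z = 0$. The decisive case is $B \neq 0$: here I would restrict the surviving equation to each of the $m$ lines $\ell_i$ through $p$ and count, line by line, the constraints that polynomiality of $c$ imposes at the $d-m$ points of $\A$ lying on $\ell_i$, then sum over $i$ to force $r \geq \frac{2(d-m)}{m}$. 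I expect this effective count — controlling the interaction between $\theta_g$ and $h_z$ and verifying that the constraints coming from the $m(d-m)$ transversal intersection points are independent in generic coordinates — to be the main obstacle. It cannot be bypassed by purely combinatorial means: the free/nearly free Tjurina identities together with the convexity bound $m-1 \geq \tau/\sum_k n_k(k-1)$ yield only the weaker estimate $m \geq \frac{d(d-1)}{(d-1)+d_1 d_2}$, which is sharp only in the balanced case $d_1 = d_2$, so a genuinely syzygy-theoretic input is required.
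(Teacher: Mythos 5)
This proposition is not proved in the paper at all: it is quoted verbatim from \cite[Prop.~1.3]{D2} and used as a black box, so there is no in-paper argument to measure your attempt against. Judged on its own terms, your proposal contains a genuine gap. Your reduction of the statement to the bound $mdr(f)\geq \frac{2(d-m)}{m}$ is correct (for both free and nearly free arrangements the smaller exponent is indeed $mdr(f)$), the decomposition $f=gh$ with $g=g(x,y)$ the product of the $m$ lines through $p=(0:0:1)$ is sound, and the branch $B=0$ is handled correctly: there $Adh=(Az-c)h_z$, whence $h_z\mid A$ and $r\geq d-m$. But everything you have actually executed only yields the dichotomy ``every nontrivial relation has $r\geq d-m$ or $r\geq m-1$'' (the latter because $\deg B=r-m+1\geq 0$ when $B\neq 0$), i.e.\ $mdr(f)\geq\min(d-m,\,m-1)$. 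This is strictly weaker than the claim whenever $m^2+m<2d$: for instance with $d=13$ and $m=4$ --- precisely the range this paper cares about --- your executed steps give $mdr(f)\geq 3$, while the proposition requires $mdr(f)\geq \lceil 9/2\rceil=5$. The entire content of the proposition therefore sits in the branch $B\neq 0$, which you do not prove: you only announce a plan (``restrict to each line $\ell_i$ through $p$ and count constraints, then sum over $i$'') and yourself flag the independence of those constraints as ``the main obstacle.'' A proof attempt that locates the difficulty without resolving it is not a proof.

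Two further cautions. First, you assert that the free/nearly free hypothesis ``enters only to identify $d_1$ with $mdr(f)$,'' i.e.\ that $mdr(f)\geq \frac{2(d-m)}{m}$ should hold for arbitrary line arrangements; this is an unproved strengthening of the cited statement, and nothing in your sketch rules out that the actual argument in \cite{D2} uses the free/nearly free structure (maximality of the global Tjurina number, or the fact that $AR(f)$ is generated in degrees $d_1,d_2$) in an essential way in the hard case. Second, your closing remark correctly observes that the purely combinatorial route via $\tau(\A)=\sum_k n_k(k-1)^2\leq \frac{m-1}{m}d(d-1)$ falls short (it gives roughly $m\geq d/(d_1+1)$ rather than $2d/(d_1+2)$), which is consistent with, and indeed underlines, the fact that the missing syzygy-theoretic count is where all the work lies.
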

 
 \begin{theorem}
 \label{thm:nfree}
 The property of being nearly free is combinatorial for arrangements of up to $12$ lines in the complex projective plane.
 \end{theorem}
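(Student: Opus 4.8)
The plan is to funnel every nearly free arrangement $\A$ into the sufficient criteria already assembled in this section. Write $d=|\A|\le 12$ and let $(d_1,d_2)$, $d_1\le d_2$, $d_1+d_2=d$, be its exponents. First I would clear away the bulk of the cases at once: if $m(\A)\ge d_1$, then Proposition \ref{prop:Cor1.7D2} already yields the conclusion, so I may assume $m(\A)\le d_1-1$. Substituting this into the lower bound $m(\A)\ge\lceil 2d/(d_1+2)\rceil$ of Proposition \ref{prop:Prop1.3D2} forces $2d\le(d_1-1)(d_1+2)$, i.e. $d_1(d_1+1)\ge 2d+2$. Intersecting with $d_1\le d/2$ and $d\le 12$ leaves only the exponent pairs $(4,4)$, $(4,5)$, $(5,5)$, $(5,6)$, $(5,7)$, $(6,6)$, in degrees $8,9,10,11,12,12$; it remains to treat these six classes under the standing hypothesis $m(\A)\le d_1-1$.

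The workhorse is a counting argument resting on three lattice-invariant inputs: the point count \eqref{eq:suma_comb}, the identity $\sum_k(k-1)n_k=d_2(d_1+1)$ read off the characteristic polynomial of Proposition \ref{prop:DS}, and the per-line relation $\sum_{P\in H}(\mathrm{mult}(P)-1)=d-1$. Summing the last over all lines gives $\sum_H n_H=\sum_P\mathrm{mult}(P)$, which the two global identities let me evaluate as an explicit function of the remaining free multiplicities. Let $t$ be the smallest admissible number of points furnished by Remark \ref{rem:sufficient_cond} and Proposition \ref{prop:sufficient_cond} for $(d_1,d_2)$ — here $t=d_1$ if $d_1=d_2$ and $t=d_1+1$ if $d_1<d_2$, since none of the pairs at hand satisfies $d_2=d_1+3$. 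Once $\sum_H n_H>d(t-1)$ some line $H$ has $n_H\ge t$, and as $n_H\le d_2+1$ by Proposition \ref{prop:upper_limit_points} this $n_H$ lands in the good range, so Remark \ref{rem:sufficient_cond} or Proposition \ref{prop:sufficient_cond} makes near freeness combinatorial. This disposes of $(4,4)$ (where in fact $\sum_H n_H=32>24$), $(4,5)$ ($39>36$), $(5,5)$ ($45+n_4\ge 45>40$), $(5,7)$ ($60+n_4\ge 62>60$) and $(6,6)$ ($60+n_4+3n_5\ge 61>60$, once one notes that $n_4=n_5=0$ is numerically impossible).

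The lone class where the crude average is too weak — and hence the main obstacle — is $(5,6)$ in degree $11$, where $\sum_H n_H=53+n_4$ beats $11\cdot 5$ only for $n_4\ge 3$. For the residual values $n_4\in\{1,2\}$ I would switch to the multiarrangement criterion of Proposition \ref{prop:multiarr} and exhibit a line carrying exactly $5$ triple points. When $n_4=2$ the count forces every line to have exactly $5$ points; the quadruple incidences obey $\sum_H q_H=4n_4=8<11$, so by pigeonhole some line misses every quadruple point, and on it the line relation $2q_H+t_H=5$ collapses to five triple points, as needed. When $n_4=1$ the same relation shows that the unique $4$-point line forced by $\sum_H n_H=54$ cannot realize $\sum_{P\in H}(\mathrm{mult}(P)-1)=10$ (the best possible being $3+2+2+2=9$), a contradiction that instead produces a line with $n_H\ge 6$ in the good range.

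I expect the delicate point to be precisely this last class: it calls for the type-by-type incidence bookkeeping — lines sorted by their profile of double, triple and quadruple points — already rehearsed for Lemma \ref{lemma:2lines_6points}, and one must be careful both to enumerate all admissible integer solutions of the linear system for the $n_k$ and to verify that each solution yields either a line in the good range or a line with $d_1$ triple points. Should any sub-case resist the averaging, the natural fallback is to route it through Proposition \ref{prop:multiarr} (available for the odd pairs $(4,5)$ and $(5,6)$) or directly through Theorem \ref{thm:thm5.8_AD}.
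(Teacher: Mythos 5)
Your proof is correct, but for the substantive cases $8\le|\A|\le 12$ it follows a genuinely different route from the paper. The paper makes the same opening reductions (Propositions \ref{prop:Cor1.7D2} and \ref{prop:Prop1.3D2} yield the same six exponent pairs), but it then discards lines with at most $4$ multiple points via \cite[Thm 1.5]{AD}, settles $|\A|\le 7$ by the explicit inventory of Proposition \ref{prop:addition}, handles $|\A|=8,9,10$ by listing the admissible line types, and for $|\A|=11,12$ encodes the surviving line types into linear systems whose lack of non-negative integer solutions is certified by a Normaliz computation. You replace all of this by a double count: $\sum_H n_H=\sum_k k\,n_k$ is determined, up to the free parameters $n_4,n_5$, by \eqref{eq:suma_comb} together with $\sum_k(k-1)n_k=d_2(d_1+1)$ (which you correctly extract from Proposition \ref{prop:DS}), and in five of the six classes this sum exceeds $d(t-1)$, forcing some line into the window $[t,d_2+1]$ covered by Remark \ref{rem:sufficient_cond}, Proposition \ref{prop:sufficient_cond} and Proposition \ref{prop:upper_limit_points}; I checked your numerical values and the nonnegativity constraints ($n_4\ge 2$ for $(5,7)$, $3n_4+8n_5\ge 6$ for $(6,6)$) that make them work. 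Your treatment of the residual class $(5,6)$, $|\A|=11$, $n_4\in\{1,2\}$ is also sound: under the harmless assumption that no line has at least $6$ points, the exact value of $\sum_H n_H$ pins down the distribution of the $n_H$, the pigeonhole on the $4n_4=8<11$ quadruple-point incidences yields a line with exactly five triple points feeding Proposition \ref{prop:multiarr} when $n_4=2$, and the impossibility of a $4$-point line realizing $\sum_{P\in H}(\mathrm{mult}(P)-1)=10$ when $n_4=1$ (at most one quadruple point exists globally) is a genuine contradiction. What your argument buys is a computer-free and more transparent proof that also avoids the appeal to \cite[Thm 1.5]{AD} and the attendant aside about perfect-square characteristic polynomials; what the paper's template buys is uniformity, since the same system-plus-Normaliz machinery is reused for $|\A|=13,14$ in Proposition \ref{prop:14free}, where a crude average no longer suffices. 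The closing hedge in your proposal is unnecessary: the case analysis you give is already complete.
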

 
 \begin{proof}
 Let $\A$ be a nearly free arrangement with exponents $d_1 \leq  d_2$ and $|\A| \leq 12$. We need to see that any other arrangement in its lattice isomorphism class is also nearly  free.
 We can further assume that $m(\A) < d_1$, otherwise the claim of the theorem is proved by \eqref{eq:Cor1.7D2}. 
 
By Proposition \ref{prop:addition}, all nearly free arrangements with $|\A| \leq 7$ satisfy the hypothesis of Proposition \ref{prop:sufficient_cond}, so the claim of the theorem holds.\\
  
For the rest of the cases we no longer know whether or not near free arrangements can be obtained by addition of a line to a free arrangement, however cases $|\A| \in \{8,\dots, 12\}$ admit an uniform approach.

 Since $\A$ cannot be generic (generic arrangements with at least $5$ lines are not nearly free, by \cite[Prop 4.7(4)]{DIM}), we have in any case $m(\A) \geq 3$, so, by the inequality \eqref{eq:Cor1.7D2}, it is enough to consider $d_1 \geq 4$. We can moreover assume that $\A$ does not contain lines with at most $4$ multiple points, since this case is covered by \cite[Thm 1.5]{AD}. Even in the case when the characteristic polynomial is a perfect square and it does not distinguish a priori between free and nearly free arrangements, we are in range $|\A| \leq 13$ and one cannot have a free arrangement within the same lattice isomorphism class as the nearly free $\A$.

By \eqref{eq:Cor1.7D2} and  \eqref{eq:ineqProp1.3D2}, for $|\A| = 8$, we get $m(\A) =3, \;(d_1, d_2) = (4,4)$, for $|\A| = 9$, we get $m(\A) =3, \;(d_1, d_2) = (4,5)$ and for $|\A| = 10$, we get $m(\A) \in \{3,4\}, \;(d_1, d_2) = (5,5)$.  We use Proposition  \ref{prop:sufficient_cond}, Remark \ref{rem:sufficient_cond} and Proposition \ref{prop:upper_limit_points} (and additionally Proposition \ref{prop:multiarr}, for $|A|$ odd) to restrict the cases of the nearly free arrangements we have to consider and to add further constraints  on those  cases (for instance, to restrict the possible types of lines in $\A$).

For $|\A| = 8$, for instance, one can only have lines with at most $d_2+1=5$ multiple points (Proposition \ref{prop:upper_limit_points}), hence, by the previous discussion, it is enough to prove the theorem in the case when $\A$ has only lines with precisely $5$ multiple points. Then we readily apply Remark \ref{rem:sufficient_cond} (i), and we are done.

For $|\A| = 9$, one can only have lines $H \in \A$ with at most $d_2+1=6$ multiple points, so we are left with two types of lines for $\A$:

\begin{itemize}
\item[(1)] $n^H_3 = 3, \; n^H_2 = 2$
\item[(2)] $n^H_3 = 2, \; n^H_2 = 4$
\end{itemize}

So, $\A$ has either a line with $5$ multiple points, and we may apply Remark \ref{rem:sufficient_cond} (ii), or a line with $6$ multiple points, and we apply Proposition  \ref{prop:sufficient_cond}.

For $|\A| = 10$, one can only have lines with at most $d_2+1=6$ multiple points, hence there are only lines with $5$ or $6$ multiple points (case already solved by Remark \ref{rem:sufficient_cond} (i)). \\

Assume $|\A| =11$.  Then $d_1 \in \{4,5\}$. But $d_1 = 4$ implies $m(\A) = 3$ and this contradicts the inequality \eqref{eq:ineqProp1.3D2}. We are left with $(d_1,d_2) = (5,6)$. By \eqref{eq:Cor1.7D2} and \eqref{eq:ineqProp1.3D2}, $m(\A) = 4$.

We may further assume that $\A$ does not have lines with $5$ triple points, since this case is already covered by Proposition \ref{prop:multiarr}.
Denote by $d,e$ the number of lines $H \in \A$ of each of the following types:
\begin{itemize}
\item[(d)] $n^H_4 = 2, \;   n^H_3 = 1, \; n^H_2 = 2$ 
\item[(e)] $n^H_4 = 1, \;   n^H_3 = 3, \; n^H_2 = 1$
\end{itemize}

As in the proof of the previous theorem, we can encode the combinatorial information into a linear system of (non-homogeneous) equations, taking also into account near freeness (Tjurina number almost maximal, \cite[Thm 1.3]{D1}). Denote by $a,b,c$ the number of double, triple, respectively quadruple, points of the arrangement $\A$. The first and second equations in the system are derived from equalities in \cite[Thm 1.3]{D1}, respectively \eqref{eq:suma_comb}. 

Notice that, apart from possible lines with $6$ or $7$ multiple points, these are all the types of possible lines in $\A$. No lines with more than $7$ multiple points are allowed, by Proposition  \ref{prop:upper_limit_points}.

Also, if $\A$ would have a line with either $6$ or $7$ multiple points, then any arrangement in its lattice isomorphism class would  also be nearly free, by  Proposition  \ref{prop:sufficient_cond} and Remark \ref{rem:sufficient_cond} (ii).

So we are left with proving the claim of the theorem in the situation when $\A$ does not contain lines with $6$ or $7$ multiple points. Then $\A$ only contains lines of type $(d),(e)$ and the next system should admit a solution. \\

\noindent$
| \; a+4b+9c=74\\
| \; a+3b+6c=55\\
| \; -2a+2d+e=0\\
| \; -3b+d+3e=0\\
| \; -4c+2d+e=0\\
| \; d+e=11
$
\\

We call a solution admissible if its  components are non-negative integers. A Normaliz (\cite{Nmz}) aided computation (we compute the integral points in a polytope) shows that there are no admissible solutions to the system. \\

We treat the same the case  $|\A|=12$. By \eqref{eq:ineqProp1.3D2} and \eqref{eq:Cor1.7D2}, there are two possible sets of exponents to consider.

{\it Case I} The arrangement has exponents $(5,7)$ and $m(\A) = 4$.\\
From Proposition \ref{prop:upper_limit_points} we get that all lines $H \in \A$ have $n_H \leq 8$. Moreover, by Remark  \ref{rem:sufficient_cond} (ii), we may consider only the situation when $n_H \neq 6,7$ and by Proposition we may add the additional restriction $n_H \neq 8$. We are left with proving the theorem for $\A$ line arrangement in which all lines have precisely $5$ multiple points. The possible types of lines $H \in \A$ are:
\begin{itemize}
\item[(d)]  $n^H_4 = 3, \;   n^H_3 = 0, \; n^H_2 = 2$
\item[(e)] $n^H_4 = 2, \;  n^H_3 = 2, \; n^H_2 = 1$
\item[(f)] $n^H_4 = 1, \;  n^H_3 = 4, \; n^H_2 = 0$
\end{itemize}

The corresponding system is (where $*$ is the number of lines of type $(*)$ and $a,b,c$ denotes the number of double, triple, respectively quadruple, points of the arrangement):\\

\noindent$
| \; a+4b+9c=90\\
| \; a+3b+6c=66\\
| \; -2a+2d+e=0\\
| \; -3b+2e+4f=0\\
| \; -4c+3d+2e+f=0\\
| \; d+e+f=12
$
\\

{\it Case II} The arrangement has exponents $(6,6)$ and $m(\A) \in \{3,4,5\}$.\\
From Proposition \ref{prop:upper_limit_points} we get that all lines $H \in \A$ have $n_H \leq 7$. Moreover, by Remark  \ref{rem:sufficient_cond} (i), we may consider only the situation when $n_H \neq 6,7$, hence are left with proving the theorem for $\A$ line arrangement in which all lines have precisely $5$ multiple points. The possible types of lines $H$ are:
\begin{itemize}
\item[(d)] $n^H_5 = 2, \;n^H_4 = 0,\;  n^H_3 = 0, \;n^H_2 = 3$	
\item[(e)]	 $n^H_5 = 1, \;n^H_4 = 1, \;  n^H_3 = 1, \; n^H_2 = 2$	
\item[(f)]	 $n^H_5 = 1, \;n^H_4 = 0,\;  n^H_3 = 3, \; n^H_2 = 1$
\item[(g)]	 $n^H_5 = 0, \;n^H_4 = 3, \;  n^H_3 = 0, \; n^H_2 = 2$
\item[(h)]	 $n^H_5 = 0, \;n^H_4 = 2, \;  n^H_3 = 2, \; n^H_2 = 1$
\item[(i)]	 $n^H_5 = 0, \;n^H_4 = 1, \;  n^H_3 = 4, \; n^H_2 = 0$
\end{itemize}

The corresponding system is (where $*$ is the number of lines of type $(*)$ and $a,b,c,0$ denotes the number of double, triple, quadruple, respectively quintuple points of the arrangement):\\

\noindent$
| \; a+4b+9c+16o=90\\
| \; a+3b+6c+10o=66\\
| \; -2a+3d+2e+f+2g+h=0\\
| \; -3b+e+3f+2h+4i=0\\
| \; -4c+e+3g+2h+i=0\\
| \; -5o+2d+e+f=0\\
| \; d+e+f+g+h+i=12
$\\

In both cases a Normaliz aided computation for the above systems of linear non-homogeneous equations shows that there are no integer non-negative solutions, so no nearly free arrangements with the constraints described by the above two cases exist.

\end{proof}

 For $|\A| = 13$ it is not that straightforward to show that near freeness is combinatorial. The resulting systems do have admissible solutions, even though these solutions may not be realizable as combinatorics of a complex projective line arrangement. However, we are able to reduce the Terao conjecture for $14$ lines arrangements to a near freeness problem concerning $13$ lines arrangements. 

\begin{prop}
\label{prop:14free}
The Terao conjecture is true for arrangements of $14$ lines if near freeness is combinatorial for $13$ lines arrangements.
\end{prop}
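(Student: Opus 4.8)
The plan is to reproduce, one cardinality higher, the free/nearly free deletion--restriction argument of Section 3, now deleting a line to pass from a free $14$-line arrangement down to a $13$-line arrangement where both Theorem \ref{thm:main} and the assumed combinatoriality of near freeness are available. So let $\A$ be free with $|\A|=14$ and exponents $(d_1,d_2)$; then $d_1+d_2=13$, and since the sum is odd necessarily $d_1<d_2$. First I would strip away the cases already known to be combinatorial, exactly as in the opening reduction of Section 3: the range $d_1\le 5$ is handled by \cite[Corollary 5.5]{A}; a line carrying at least $d_2+1=8$ multiple points makes freeness combinatorial by the division theorem \cite[Propositions 1.47, 1.23(ii)]{Y}, \cite[Theorem 2.7]{ACKN}; and the extreme multiplicity cases are removed by Proposition \ref{prop:Prop1.3D2} (which forces $m(\A)\ge 2\cdot 14/(d_1+2)$, hence $m(\A)\ge 4$) together with \cite[Corollary 1.4]{D2} (disposing of $m(\A)\ge d_1$). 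What survives is the single residual configuration $(d_1,d_2)=(6,7)$, $m(\A)\in\{4,5\}$, and no line with $8$ or more multiple points.

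The key step is then a counting argument, lighter than Lemma \ref{lemma:2lines_6points}, producing a line $H$ with $|\A^H|\in\{6,7\}$. Since all singularities are ordinary and $m(\A)\le 5$, freeness gives (via \cite[Cor 1.2]{D1}) the global Tjurina identity $n_2+4n_3+9n_4+16n_5=(d-1)^2-d_1d_2=127$, while \eqref{eq:suma_comb} reads $n_2+3n_3+6n_4+10n_5=91$. Suppose, toward a contradiction, that every line satisfied $|\A^H|\le 5$. Counting point--line incidences gives $\sum_{H\in\A}|\A^H|=\sum_k k\,n_k=2n_2+3n_3+4n_4+5n_5\le 14\cdot 5=70$. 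Subtracting the two identities yields $n_3+3n_4+6n_5=36$, and subtracting the incidence bound from twice the second identity yields $3n_3+8n_4+15n_5\ge 112$; eliminating $n_3$ between these collapses them to $n_4+3n_5\le -4$, which is absurd. Hence some line has $|\A^H|\ge 6$, and after the division reduction this forces $|\A^H|\in\{6,7\}$.

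Granting such a line $H$, the remainder is a formal deletion/re-addition. If $|\A^H|=7=d_2$, then by Terao's addition--deletion theorem $\B=\A\setminus\{H\}$ is \emph{free} with $exp(\B)=(6,6)$; for any $\A'$ lattice isomorphic to $\A$ I delete the corresponding line $H'$ (so $\B'=\A'\setminus\{H'\}$ is lattice isomorphic to $\B$), conclude that $\B'$ is free with $exp(\B')=(6,6)$ by Theorem \ref{thm:main} and the combinatoriality of free exponents, and re-add $H'$, using $|\A'^{H'}|=7$, to obtain that $\A'$ is free. If instead $|\A^H|=6=d_1$, then Theorem \ref{thm:5.11_del_restr} makes $\B=\A\setminus\{H\}$ \emph{nearly free} with $nexp(\B)=(6,7)$; the corresponding $\B'$ is lattice isomorphic to $\B$, hence nearly free \emph{by the assumed combinatoriality of near freeness for $13$ lines}, with the same degree and the same global Tjurina number as $\B$, so $nexp(\B')=(6,7)$ by \cite[Cor 3.5]{DS}; since $|\A'^{H'}|=6=d_1$, a second application of Theorem \ref{thm:5.11_del_restr} gives that $\A'$ is free. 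Either way every arrangement lattice isomorphic to $\A$ is free, which is Terao's conjecture for $14$ lines.

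The delicate point, and the reason the statement is only conditional, is precisely the branch $|\A^H|=6$. The counting argument guarantees a line with $6$ or $7$ points but cannot be sharpened to always produce a line with $7$ points: running the same elimination under the weaker hypothesis that all lines have $|\A^H|\le 6$ only yields $n_4+3n_5\le 10$, which is perfectly consistent, so there are (at least numerically) free $14$-line arrangements whose every line meets the others in at most $6$ points. On such arrangements deletion cannot remain in the free world, and the detour through a nearly free $13$-line arrangement is unavoidable. This is exactly where combinatoriality of near freeness in cardinality $13$ enters, and where I would expect any attempt to discharge the hypothesis to run into genuine difficulty.
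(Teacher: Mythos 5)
Your proposal is correct, and its skeleton coincides with the paper's: reduce to $(d_1,d_2)=(6,7)$, $m(\A)\in\{4,5\}$, produce a line $H$ with few enough but not too few intersection points, delete it, and shuttle the (near) freeness of the resulting $13$-line arrangement across the lattice isomorphism before re-adding $H'$ via Theorem \ref{thm:5.11_del_restr}. You diverge in two worthwhile ways. First, the paper proves the existence of a line with exactly $6$ multiple points by first discarding lines with $\le 4$ points (via \cite[Thm. 1.5]{AD}) and lines with $\ge 7$ points (via \cite[Thm. 2.7]{ACKN}), then listing the six admissible types of lines with exactly $5$ points and feeding a seven-variable linear system to Normaliz to show it has no non-negative integer solution. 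Your incidence count replaces that computation entirely: combining $n_2+4n_3+9n_4+16n_5=127$, $n_2+3n_3+6n_4+10n_5=91$ and $2n_2+3n_3+4n_4+5n_5\le 70$ to reach $n_4+3n_5\le -4$ is a correct three-line elimination (I checked the arithmetic), it proves the stronger statement that some line has at least $6$ points without needing the $n_H\le 4$ reduction at all, and it shows the paper's computer verification is dispensable here. Second, where the paper disposes of $n_H\ge 7$ wholesale by \cite[Thm. 2.7]{ACKN}, you split off $n_H=7=d_2$ and handle it by the classical Terao addition--deletion theorem, staying inside the free world ($\B$ free with $\exp=(6,6)$, $\B'$ free by Theorem \ref{thm:main} with the same exponents by Terao factorization, then add $H'$ back); this is valid, only slightly longer, and relies on \cite{OT} rather than \cite{ACKN}. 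The conditional branch $|\A^H|=6$, where the hypothesis on combinatoriality of near freeness for $13$ lines is consumed, is handled exactly as in the paper, including the appeal to \cite[Cor. 3.5]{DS} to match the near-exponents of $\B$ and $\B'$.
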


\begin{proof}
Let $\A$ be a free $14$ lines arrangement. By \cite[Cor. 5.5]{A}, we only have to consider the case when the exponents of $\A$ are $(6,7)$. Apply \cite[Cor.1.4]{D2} and \eqref{eq:ineqProp1.3D2} to deduce $m(\A) \in \{4,5\}$. Since the characteristic polynomial of $\A$ is not a perfect square, the theorem is proved in the case when $\A$ has a line with at most $4$ multiple points, by \cite[Thm. 1.5]{AD}; so we can further assume that $\A$ only contains lines with at least $5$ multiple points. By \cite[Thm. 2.7]{ACKN}, if there exists $H \in \A$ such that $n_H$, the number of multiple points on $H$, is at least $7$, the Terao conjecture is true for $\A$.

Hence we only need to prove the claim for arrangements $\A$ of $14$ lines with exponents $(6,7)$ and $m(\A) \in \{4,5\}, \;n_H \in \{5,6\}$, for all $H \in \A$. Notice that if $\A$ contains a line $H$ with $n_H = 6$, then, by Theorem \ref{thm:5.11_del_restr}, since the property of being nearly free for arrangements of $13$ lines is combinatorial (Theorem \ref{thm:main}), and the exponents of two lattice isomorphic nearly free arrangements coincide (see \cite[Cor. 3.5]{DS}), our claim is proven. So, we only need to show that $\A$ contains a line with precisely $6$ multiple points. Assume next that $\A$ does not contain any such line. Then the lines of $\A$ must be of one of the following types:

\begin{enumerate}
\item[(d)] $n^H_5 = 2, \;n^H_4 = 0,\;  n^H_3 = 2, \;n^H_2 = 1$
\item[(e)] $n^H_5 = 2, \;n^H_4 = 1,\;  n^H_3 = 0, \;n^H_2 = 2$
\item[(f)] $n^H_5 = 1, \;n^H_4 = 2,\;  n^H_3 = 1, \;n^H_2 = 1$
\item[(g)] $n^H_5 = 1, \;n^H_4 = 1,\;  n^H_3 = 3, \;n^H_2 = 0$
\item[(h)] $n^H_5 = 0, \;n^H_4 = 4,\;  n^H_3 = 0, \;n^H_2 = 1$
\item[(i)] $n^H_5 = 0, \;n^H_4 = 3,\;  n^H_3 = 2, \;n^H_2 = 0$
\end{enumerate}

The combinatorial data of $\A$ is encoded in the following system:\\

\noindent $
| \; a+4b+9c+16o=127\\
| \; a+3b+6c+10o=91\\
| \; -2a+d+2e+f+h=0\\
| \; -3b+2d+f+3g+2i=0\\
| \; -4c+e+2f+g+4h+3i=0\\
| \; -5c+2d+2e+f+g=0\\
| \; d+e+f+g+h+i=14
$
\\

\noindent where $a,b,c,o$ are the number of double, triple, quadruple, respectively quintuple points of the arrangement and $d,e,f,g,h,i$ encode the number of lines of the corresponding types. We add as constraints that all variables should be positive integers. Again a Normaliz computation shows that there is no admissible solution. It follows that our assumption that $\A$ does not contain a line with $6$ multiple points is false, so the claim of the theorem is proven.
\end{proof}


\begin{thebibliography}{00}

\bibitem{A} T.~Abe,
{\em Roots of characteristic polynomials and intersection 
points of line arrangements},
J. of Singularities 8 (2014),100--117.

\bibitem{A2} T.~Abe,
{\em Exponents of 2-multiarrangements and
freeness of 3-arrangements},
\href{arxiv.org/pdf/1005.5276.pdf}{arXiv:1005.5276}

\bibitem{ACKN} T.~Abe, M.~ Cuntz, H.~ Kawanoue,  T.~Nozawa
{\em  Non-recursive freeness and non-rigidity},
Discrete Math. 339 (2016), no. 5, 1430--1449

\bibitem{AD} T.~Abe, A.~Dimca,
{\em On the splitting types of bundles of logarithmic vector fields along plane curves}, 
\href{https://arxiv.org/pdf/1706.05146.pdf}{arXiv:1706.05146}

\bibitem{D1} A.~Dimca, 
{\em Freeness versus maximal global Tjurina number for plane curves}, 
Math. Proc. Cambridge Phil. Soc.  163 (2017), 161--172.

\bibitem{D2} A.~Dimca,
{\em Curve arrangements, pencils, and Jacobian syzygies},
Michigan Math. J. 66 (2017), 347--365.

\bibitem{DHA}  A. Dimca,   {\em Hyperplane Arrangements: An Introduction}, Universitext, Springer, 2017.

\bibitem{DIM} A.~Dimca, D.~Ibadula, A.~Macinic,
{\em Numerical invariants and moduli spaces for line arrangements}
\href{https://arxiv.org/abs/1609.06551}{arXiv:1609.06551}

\bibitem{DS} A.~ Dimca, G.~ Sticlaru
{\em Free and nearly free curves vs. rational cuspidal plane curves}
Publ. RIMS Kyoto Univ. 54 (2018), 163--179.

\bibitem{DS2} A.~ Dimca, G.~ Sticlaru
{\em On supersolvable and nearly supersolvable line arrangements}
 \href{https://arxiv.org/abs/1712.03885}{arXiv:1712.03885}
 
\bibitem{FV}  D. Faenzi, J. Vall\`es, 
{\em Logarithmic bundles and line arrangements, an approach via the standard construction}, J. London.Math.Soc.
{90} (2014), 675--694.

\bibitem{H} F.~Hirzebruch,
{\em Arrangements of lines and algebraic surfaces}, Arithmetic and geometry, Vol. II, Progr.
Math. 36, Birkhauser, Boston, Mass., 1983, 113--140.

\bibitem{Nmz} W. Bruns, B. Ichim, T. R\"{o} mer, R. Sieg and C. S\"{o} ger: Normaliz. 
{\em Algorithms for rational cones and affine monoids}
\href{https://www.normaliz.uni-osnabrueck.de}{https://www.normaliz.uni-osnabrueck.de}

\bibitem{OT} P.~Orlik, H.~Terao,
{\em Arrangements of hyperplanes}, Grundlehren Math. Wiss., vol.~300,
Springer-Verlag, Berlin, 1992.

\bibitem{Y0} M. ~Yoshinaga, 
{\em On the freeness of 3-arrangements}
 Bull. London Math. Soc. 37 (2005),  126--134.

\bibitem{Y} M. ~Yoshinaga,
{\em Freeness of hyperplane arrangements and related topics}, Annales de la Facult\'e des Sciences de Toulouse Vol XXIII,
 (2014),  483--512.

\end{thebibliography}
\end{document}